\documentclass[a4paper, 12pt, oneside, reqno]{amsart}
\calclayout

\usepackage[utf8]{inputenc}
\usepackage[T1]{fontenc}
\usepackage[english]{babel}
\usepackage{amsmath, amsfonts, amssymb, amsthm, mathtools, indentfirst, bbm}
\usepackage[dvipsnames]{xcolor}
\usepackage[shortlabels]{enumitem}
\usepackage[languagenames,fixlanguage]{babelbib}
\usepackage{commath}
\usepackage{textcomp}
\usepackage{mathrsfs}
\usepackage{todonotes}
\usepackage{pgfplots}
\usepackage{subcaption}
\usepackage{graphicx}
\usepackage{cite}
\usepackage{placeins}
\allowdisplaybreaks

\usepackage[unicode=true, pdfstartview={XYZ null null 1.00}, colorlinks=true, linkcolor=blue, urlcolor=blue, citecolor=purple]{hyperref}

\makeatletter
\@namedef{subjclassname@2020}{\textup{2020} Mathematics Subject Classification}
\makeatother

\usepackage{array}

\newtheorem{theorem}{Theorem}[section]
\newtheorem{proposition}[theorem]{Proposition}
\newtheorem{lemma}[theorem]{Lemma}
\newtheorem{corollary}[theorem]{Corollary}
\theoremstyle{remark}
\newtheorem{remark}[theorem]{Remark}
\newtheorem{definition}[theorem]{Definition}

\newcommand{\E}{\mathbb{E}}
\newcommand{\Prob}{\mathbb{P}}
\newcommand{\Var}{\mathrm{Var}}
\newcommand{\1}{\mathbf{1}}

\newcommand{\N}{\mathbb{N}}

\numberwithin{equation}{section}

\makeatletter
\def\@tocline#1#2#3#4#5#6#7{\relax
  \ifnum #1>\c@tocdepth 
  \else
    \par \addpenalty\@secpenalty\addvspace{#2}%
    \begingroup \hyphenpenalty\@M
    \@ifempty{#4}{%
      \@tempdima\csname r@tocindent\number#1\endcsname\relax
    }{%
      \@tempdima#4\relax
    }%
    \parindent\z@ \leftskip#3\relax \advance\leftskip\@tempdima\relax
    \rightskip\@pnumwidth plus4em \parfillskip-\@pnumwidth
    #5\leavevmode\hskip-\@tempdima
      \ifcase #1
       \or\or \hskip 1em \or \hskip 2em \else \hskip 3em \fi%
      #6\nobreak\relax
    \dotfill\hbox to\@pnumwidth{\@tocpagenum{#7}}\par
    \nobreak
    \endgroup
  \fi}
\makeatother

\title{Weighted-threshold Coupon Collection}
    \author{Sebastian M\"uller}
		\address{
		Sebastian M\"uller,
		Aix Marseille Universit\'e, CNRS, Centrale Marseille, I2M - UMR 7373, 13453 Marseille, France 
		}
		\email{sebastian.muller@univ-amu.fr}
	\author{Stjepan Šebek}
		\address{
		Stjepan Šebek,
		University of Zagreb Faculty of Electrical Engineering and Computing, 10000 Zagreb, Croatia}
		\email{stjepan.sebek@fer.unizg.hr}
\date{}

\begin{document}

\begin{abstract}
We study a weighted-threshold version of the coupon collector problem in
continuous time. Each type \(i\) is discovered at rate \(\lambda p_i\) and,
once discovered, contributes weight \(w_i\), where \(p\) and \(w\) are
probability vectors. The stopping time  when the total weight of the discovered
types first exceeds a fixed threshold \(\theta\in(0,1)\) is called the quorum time. 

We first prove concentration estimates and compare the quorum time with the
corresponding deterministic threshold time obtained from the mean discovered
weight. When all discovery rates are equal and the largest individual weight
tends to zero, the first-order asymptotics are universal and do not depend on
the weight vector. We then analyze the aligned Zipf family
\(p_i=w_i\propto i^{-s}\). This model has three regimes: a deterministic
linear scale for \(0\le s<1\), a critical scale \(H_NN^\theta\) at \(s=1\),
with an explicit leading constant, and a non-degenerate random hitting-time
limit for \(s>1\). Finally, we show that the expected quorum time need not be
monotone in the Zipf exponent. 
\end{abstract}

\subjclass[2020]{68M14, 
94A20, 
91A20 
}
\keywords{generalized coupon collector, quorum formation, distributed systems}

\maketitle

\section{Introduction}

The classical coupon collector problem asks how long it takes to observe every type in a finite population. In many stochastic sampling problems, however, complete coverage is not the relevant stopping criterion. One may only need to observe enough total mass: enough probability mass in a sampling problem, enough sensor coverage in a monitoring system, or enough voting weight in a quorum system. This leads to a natural weighted-threshold version of coupon collection. Each type has a weight, types are discovered at possibly unequal rates, and the process stops when the total weight of distinct discovered types crosses a prescribed threshold.

A central feature of the model is that two different profiles are present. The activity profile \(p\) controls how quickly types are discovered, while the weight profile \(w\) controls how much discovered types contribute to the threshold. These do not need to coincide, i.e.\ a frequently observed type may carry little weight, and a large-weight type may be rarely observed, and vice-versa.

We study this problem in continuous time. For \(1\le i\le N\), type \(i\) carries quorum weight \(w_i\) and is discovered at the first event time \(T_i\) of an independent Poisson clock of rate \(\lambda p_i\). Here \(w=(w_i)_{i=1}^N\) and \(p=(p_i)_{i=1}^N\) are probability vectors. The \emph{discovered-mass process} is
\begin{equation}\label{eq:def_of_discovered-mass_process}
    M_N(t) := \sum_{i=1}^N w_i \1_{\{T_i\le t\}}, \qquad t \ge 0,
\end{equation}
and the \emph{weighted quorum time} is
\begin{equation}\label{eq:def_of_quorum_time}
    \tau_{N,\theta}:=\inf\{t\ge 0:M_N(t)>\theta\},
    \qquad 0<\theta<1.
\end{equation}
Notice that the word ``distinct'' is important since repeated observations of the same type do not add new mass. Thus, the problem is close to the unequal-probability coupon collection, but the stopping rule is different. We do not wait for all types to appear, nor for a fixed number of types, nor for prescribed quotas of each type. We stop when the discovered set has enough total weight. The main focus of our paper is asymptotic analysis of the weighted quorum time $\tau_{N,\theta}$ as the number of different types $N$ grows to infinity.

The paper is organized as follows. First, in Section \ref{sec:Model_and_Exact_Representation}, we introduce the model rigorously. 
In Section~\ref{sec:Concentration_and_Deterministic_Comparison} we prove
concentration bounds for \(M_N(t)\) and deterministic comparison estimates
between \(\tau_{N,\theta}\) and the root of the mean profile.

The discovered-mass process \(M_N(t)\) from \eqref{eq:def_of_discovered-mass_process}, at each fixed time, is a weighted sum of independent Bernoulli variables, and Bernstein concentration gives a  comparison between the quorum time \(\tau_{N,\theta}\) introduced in~\eqref{eq:def_of_quorum_time} and the deterministic root of the mean profile
\[
    m_N(t):=\E[M_N(t)].
\]
In particular, in the so-called \emph{diffuse} setting, where the maximum weight tends to zero as the number of types $N$ grows to infinity (see Definition~\ref{def:diffuse_vs_atomic}), convergence of the mean profile implies a law of large numbers for the quorum time.

We then identify two benchmark regimes. Under homogeneous clocks, considered in Section~\ref{sec:Homogeneous _Clocks}, where \(p_i=1/N\), the first-order quorum time is universal for all diffuse weight profiles: 
\[
    \frac{\tau_{N,\theta}}{N}
    \xrightarrow[N\to\infty]{\mathbb P}
    -\frac{\log(1-\theta)}{\lambda}.
\]
The assumption of a diffuse weight profile is essential. If a type keeps positive weight as \(N\to\infty\), its random discovery time may remain visible on the scale \(N\), and the limit need not be deterministic.

This motivates the aligned Zipf family
\[
    p_i=w_i=\frac{i^{-s}}{\sum_{j=1}^N j^{-s}}.
\]
Here the same parameter controls discovery rates and weights. For \(s\le 1\), the largest weight tends to zero, while for \(s>1\) the leading weights have positive limits. The model therefore passes from deterministic first-order behaviour to a genuinely random limiting hitting time.

We prove that this transition has three parts. 
For \(0\le s<1\) (see Section~\ref{sec:The_Diffuse_Aligned_Zipf_Regime}), the quorum time $\tau_{N, \theta}$ converges, on the linear scale $N$, to a constant that is the solution of an explicit integral equation. At \(s=1\) (see Section~\ref{sec:The_Critical_Aligned_Zipf_Regime}), the correct scale is \(H_NN^\theta\), where \(H_N=\sum_{i=1}^N i^{-1}\), and we identify the leading constant. For \(s>1\) (see Section~\ref{sec:The_Atomic_Aligned_Zipf_Regime}), the quorum time converges almost surely to a random hitting time of an infinite weighted Bernoulli process. Finally, we show that the expected quorum time is not, in general, monotone in the Zipf exponent (see Section~\ref{sec:Exact_Non-Monotonicity}).

Weighted quorum formation is one practical source of the problem. Classical quorum models often count distinct participants equally \cite{gifford1979,malkhi1998,castro1999}; weighted variants arise when participants carry voting power, stake, reputation, resource responsibility, or another score \cite{kiayias2017,pass2017,muller2022tangle,muller2021fast}. In this interpretation a coupon type $i$ corresponds to a participant or node $i$, \(p_i\) describes activity or responsiveness of node $i$, \(w_i\) describes quorum weight of node $i$, and the quorum time is a tractable stochastic proxy for the time needed to accumulate enough distinct participating weight.

\subsection{Related work}
Unequal-probability coupon collection is a classical problem, with analytic, probabilistic, and computational treatments of the full-collection problem in \cite{flajolet1992,boneh1997,brown2008,neal2008,doumas2012,doumas2019}. Other nearby directions include the time to collect a fixed number of distinct coupons \cite{anceaume2015,anceaume2016}, a version of the full-collection problem where in each step a subset of all the coupons is sampled (instead of just one coupon) \cite{adler2001,baake2026}, multiple-subset stopping rules where there are several subsets of coupon types and coupons are collected until all of the types of at least one of these subsets have been collected \cite{chang2007}, unequal-probability full-collection approximations \cite{berenbrink2009}, and collecting until each coupon reaches a prescribed quota \cite{newman1960double, may2008coupon}.

Although coupon collection is a classical topic, recent work continues to study
new variants and asymptotic regimes, including group-draw models,
non-monotone dynamics, competition between collectors, and sibling variants
\cite{long2026, berend_sher2026, long2026-2, attrill-garoni,
doumas-spektor-1, barak-berend, long2026-3, long2026-4,
doumas-spektor-2}.

Our model fits the framework of multiple-subset stopping rule from \cite{chang2007} at the level of abstract definition. Let
\[
\mathcal{F}_\theta:=\left\{A\subseteq[N]:\sum_{i\in A}w_i>\theta\right\},
\]
where we use standard notation $[N] = \{1, 2, \ldots, N\}$. In terms of the multiple-subset stopping rule, we can say that we are collecting coupons until all of the types of at least one of the subsets from family $\mathcal{F}_\theta$ have been collected. However, we go beyond results from \cite{chang2007} in the direction of the weighted-threshold geometry. Notice that in our case the family $\mathcal{F}_\theta$ is generated by a weight vector, changes with $N$, and interacts with a possibly different activity profile $p$.

The aligned case \(p=w\) is also closely related to occupancy and missing-mass problems. Under Poissonized sampling, \(1-M_N(t)\) is the mass of types not yet observed. Infinite urn schemes and missing-mass concentration provide a natural probabilistic background, especially under power-law assumptions \cite{karlin1967,gnedin2007,berend2013,benhamou2017}. Our results are complementary to this literature. Namely, instead of estimating the missing mass at a fixed sample size, we study the first time at which the discovered mass crosses a given threshold.

\section{Model and Exact Representation}\label{sec:Model_and_Exact_Representation}
Fix $N \in \N$, a quorum-weight vector $w^{(N)}=(w_1,\dots,w_N)$ and an activity vector $p^{(N)}=(p_1,\dots,p_N)$ with
\[
w_i>0,
\qquad
\sum_{i=1}^N w_i=1,
\]
\[
p_i>0,
\qquad
\sum_{i=1}^N p_i=1,
\]
a rate parameter $\lambda>0$, and a threshold $\theta\in(0,1)$. For each $i\in[N]$, let $\{N_i(t)\}_{t\ge 0}$ be an independent Poisson process of rate $\lambda p_i$, and let
\[
T_i:=\inf\{t\ge 0:N_i(t)\ge 1\}.
\]
Equivalently,
\[
T_i \sim \mathrm{Exp}(\lambda p_i),
\qquad i\in[N],
\]
independently.
\begin{remark}\label{rem:bernoulli_structure}
    Notice that the discovered-mass process at time $t$, $M_N(t) = \sum_{i=1}^N w_i \1_{\{T_i \le t\}}$, is just a weighted sum of independent Bernoulli random variables. We will sometimes write $\xi_i(t) = \1_{\{T_i \le  t\}}$. Random variables $\{\xi_i(t) : 1 \le i \le N\}$ are clearly independent and have Bernoulli's distribution with
    \begin{equation*}
        \Prob(\xi_i = 1) = \Prob(T_i \le t) = 1 - e^{-\lambda p_i t}.
    \end{equation*}
\end{remark}
Since $M_N(t)$ is non-decreasing and right continuous, we have, for every $t\ge 0$,
\[
\{\tau_{N,\theta}>t\}=\{M_N(t)\le \theta\}.
\]
Applying Tonelli's theorem to the nonnegative random variable $\tau_{N,\theta}$, we get the standard integral formula for the expected value, namely,
\begin{equation}\label{eq:E_tau_Tonelli}
    \E[\tau_{N,\theta}] =
    \int_0^\infty \Prob(\tau_{N,\theta}>t)\,dt
    =
    \int_0^\infty \Prob(M_N(t)\le \theta)\,dt.    
\end{equation}
We now derive a formula for  $\Prob(M_N(t) \le \theta)$. For any $A \subseteq [N]$ denote by
\[
w(A):=\sum_{i\in A} w_i.
\]
Using Remark \ref{rem:bernoulli_structure} we get (for every $t\ge 0$)
\begin{align*}
    \Prob\bigl(M_N(t)\le \theta\bigr)
    & = \sum_{A\subseteq[N]:\,w(A)\le \theta}
\Prob\!\left(\{i:T_i\le t\}=A\right) \\
    & = \sum_{A\subseteq[N]:\,w(A)\le \theta}
\prod_{i\in A}\bigl(1-e^{-\lambda p_i t}\bigr)
\prod_{j\notin A} e^{-\lambda p_j t}.
\end{align*}
Using again Remark \ref{rem:bernoulli_structure} we obtain
\begin{equation}\label{eq:mean_profile}
   m_N(t) =\E[M_N(t)] = \sum_{i = 1}^N w_i \bigl(1-e^{-\lambda p_i t}\bigr),
\end{equation}
and
\begin{equation}\label{eq:Var_MNt}
    \Var(M_N(t))=\sum_{i=1}^N w_i^2 e^{-\lambda p_i t}\bigl(1-e^{-\lambda p_i t}\bigr).
\end{equation}

\section{Concentration and Deterministic Comparison}\label{sec:Concentration_and_Deterministic_Comparison}
In this section we provide a general concentration framework for weighted quorum collection with separate activity and quorum profiles. 
Let \(w^{(N)}=(w_1^{(N)},\ldots,w_N^{(N)})\) be a sequence of probability
vectors, and extend it by setting \(w_i^{(N)}=0\) for \(i>N\). Write
\[
    w_N^\ast:=\max_{1\le i\le N} w_i^{(N)} .
\]

\begin{definition}\label{def:diffuse_vs_atomic}

We say that the weights are \emph{diffuse} if
\[
    w_N^\ast \xrightarrow[N\to\infty]{} 0 .
\]

We say that the weights are \emph{atomic} if there exists a sequence \(w^{(\infty)}=(w_i^{(\infty)})_{i\ge1}\) such that
\[
    w_i^{(N)} \xrightarrow[N\to\infty]{} w_i^{(\infty)} \qquad\text{for every fixed } i\ge1,
\]
and
\[
     w_i^{(\infty)} >0 \mbox{ for some } i.
\]
\end{definition}

\begin{remark}
The two notions, diffuse and atomic, are not meant to cover all possible asymptotic behaviors. The diffuse case means that no individual type keeps positive weight. The atomic case means that at least one fixed type keeps positive limiting weight. In the aligned Zipf family studied below, \(0\le s\le1\) is diffuse, whereas \(s>1\) is atomic with $w_i^{(\infty)}=\frac{i^{-s}}{\zeta(s)}$.
\end{remark}

\begin{theorem}\label{thm:bernstein}
For every $t\ge 0$ and every $x>0$,
\[
\Prob\bigl(|M_N(t)-m_N(t)|\ge x\bigr)
\le
2\exp\!\left(
-\frac{x^2}{2\Var(M_N(t))+\frac23 w_N^\ast x}
\right).
\]
In particular, since $\Var(M_N(t))\le w_N^\ast$, one has
\[
\Prob\bigl(|M_N(t)-m_N(t)|\ge x\bigr)
\le
2\exp\!\left(
-\frac{x^2}{2w_N^\ast+\frac23 w_N^\ast x}
\right).
\]
\end{theorem}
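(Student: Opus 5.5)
This is a direct application of Bernstein's inequality to a sum of independent, bounded, centered random variables. By Remark~\ref{rem:bernoulli_structure}, at a fixed time $t$ we can write
\[
M_N(t)-m_N(t)=\sum_{i=1}^N X_i, \qquad X_i:=w_i\bigl(\xi_i(t)-q_i\bigr), \qquad q_i:=1-e^{-\lambda p_i t}.
\]
The $X_i$ are independent with $\E X_i=0$. Since $\xi_i(t)-q_i\in[-q_i,1-q_i]\subseteq[-1,1]$, each satisfies $|X_i|\le w_i\le w_N^\ast$. Moreover $\sum_i\E X_i^2=\Var(M_N(t))$, which is exactly formula \eqref{eq:Var_MNt}.

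For the one-sided bound I use the exponential moment (Chernoff) method. Set $\sigma^2=\Var(M_N(t))$ and $b=w_N^\ast$. For $0<u<3/b$, since $|X_i|\le b$,
\[
\E e^{uX_i}\le 1+\E X_i^2\sum_{k\ge2}\frac{u^kb^{k-2}}{k!}\le 1+\frac{u^2\E X_i^2}{2(1-ub/3)}.
\]
This uses $k!\ge 2\cdot 3^{k-2}$, which lets the tail be summed as a geometric series. Applying $1+y\le e^y$ and independence gives
\[
\E e^{u(M_N(t)-m_N(t))}\le\exp\!\left(\frac{u^2\sigma^2}{2(1-ub/3)}\right).
\]
Markov's inequality then yields
\[
\Prob\bigl(M_N(t)-m_N(t)\ge x\bigr)\le\exp\!\left(-ux+\frac{u^2\sigma^2}{2(1-ub/3)}\right).
\]
I choose $u=x/(\sigma^2+bx/3)$, which lies in $(0,3/b)$. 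With this choice $1-ub/3=\sigma^2/(\sigma^2+bx/3)$, so the exponent simplifies to
\[
-\frac{x^2}{2(\sigma^2+bx/3)}=-\frac{x^2}{2\sigma^2+\tfrac23 bx}.
\]
The same argument applied to $-X_i$ handles the lower tail. A union bound over the two tails gives the factor $2$. The only slightly delicate step is this algebra; one could simply cite the standard Bernstein inequality instead.

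For the second display, I bound the variance using $w_i^2\le w_N^\ast w_i$ and $e^{-\lambda p_i t}(1-e^{-\lambda p_i t})\le 1/4\le1$. Together with $\sum_i w_i=1$ this gives
\[
\Var(M_N(t))\le w_N^\ast\sum_i w_i=w_N^\ast.
\]
The right-hand side of the first bound is increasing in $\sigma^2$, so we may substitute this upper bound for $\sigma^2$, which proves the second inequality.
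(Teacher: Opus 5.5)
Your proposal is correct and follows the same route as the paper: the same centered decomposition $X_i=w_i(\xi_i(t)-\E\xi_i(t))$ with $|X_i|\le w_N^\ast$ and $\sum_i\E X_i^2=\Var(M_N(t))$, then Bernstein's inequality, then $\Var(M_N(t))\le\sum_i w_i^2\le w_N^\ast$. The only difference is that you rederive Bernstein's inequality via the Chernoff/moment-series argument, whereas the paper simply cites it. One harmless nit is that your choice $u=x/(\sigma^2+bx/3)$ equals $3/b$, which lies outside $(0,3/b)$, when $\sigma^2=0$; this happens only at $t=0$, where $M_N(0)=m_N(0)=0$ and the bound holds trivially.
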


\begin{proof}
Using the notation from Remark \ref{rem:bernoulli_structure} set
\[
X_i(t):=w_i\bigl(\xi_i(t)-\E[\xi_i(t)]\bigr),
\qquad i\in[N].
\]
Then $M_N(t)-m_N(t)=\sum_{i=1}^N X_i(t)$, the variables $X_i(t)$ are independent and centered, and
\[
|X_i(t)|\le w_i\le w_N^\ast.
\]
The sum of their variances is exactly $\Var(M_N(t))$. Bernstein's inequality (see \cite[Theorem 2.10]{BLM-concentration}), therefore, yields
\[
\Prob\bigl(|M_N(t)-m_N(t)|\ge x\bigr)
\le
2\exp\!\left(
-\frac{x^2}{2\Var(M_N(t))+\frac23 w_N^\ast x}
\right).
\]
For the simplified bound, note that from \eqref{eq:Var_MNt} we have
\[
\Var(M_N(t))
=
\sum_{i=1}^N w_i^2 e^{-\lambda p_i t}\bigl(1-e^{-\lambda p_i t}\bigr)
\le
\sum_{i=1}^N w_i^2
\le
w_N^\ast\sum_{i=1}^N w_i
=
w_N^\ast.
\qedhere
\]
\end{proof}

From \eqref{eq:mean_profile} we have that the \emph{mean profile} $m_N(t)$ is continuous and  strictly increasing in $t$, starts at $0$, and tends to $1$ as $t\to\infty$. Hence for every $\theta\in(0,1)$ there is a unique deterministic comparison time
\[
t_{N,\theta}:=m_N^{-1}(\theta).
\]

\begin{theorem}\label{thm:root-comparison}
Fix $\theta\in(0,1)$ and $0<\delta<t_{N,\theta}$. Define
\[
\Delta_{N,\theta}(\delta):=
\min\Bigl\{
\theta-m_N(t_{N,\theta}-\delta),
\,
m_N(t_{N,\theta}+\delta)-\theta
\Bigr\}.
\]
Then $\Delta_{N,\theta}(\delta)>0$ and
\[
\Prob\bigl(|\tau_{N,\theta}-t_{N,\theta}| > \delta\bigr)
\le
4\exp\!\left(
-\frac{\Delta_{N,\theta}(\delta)^2}
{2w_N^\ast+\frac23 w_N^\ast \Delta_{N,\theta}(\delta)}
\right).
\]
\end{theorem}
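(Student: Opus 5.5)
The plan is to reduce the two-sided deviation of $\tau_{N,\theta}$ to two fixed-time deviations of $M_N$ and then apply Theorem~\ref{thm:bernstein} at each of those times.

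\emph{Positivity of $\Delta_{N,\theta}(\delta)$.} By \eqref{eq:mean_profile}, $m_N$ is strictly increasing, since every $w_i>0$ and every $p_i>0$. Because $0<\delta<t_{N,\theta}$, both times $t_-:=t_{N,\theta}-\delta>0$ and $t_+:=t_{N,\theta}+\delta$ are well defined. Strict monotonicity then gives $m_N(t_-)<\theta<m_N(t_+)$, so both entries of the minimum are strictly positive, and hence $\Delta:=\Delta_{N,\theta}(\delta)>0$.

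\emph{Reduction to two fixed times.} Split the event as
\[
\{|\tau_{N,\theta}-t_{N,\theta}|>\delta\}\subseteq\{\tau_{N,\theta}<t_-\}\cup\{\tau_{N,\theta}>t_+\}.
\]
For the upper tail, use the identity $\{\tau_{N,\theta}>t\}=\{M_N(t)\le\theta\}$ from Section~\ref{sec:Model_and_Exact_Representation}. On this event $M_N(t_+)\le\theta\le m_N(t_+)-\Delta$, so $M_N(t_+)-m_N(t_+)\le-\Delta$, and in particular $|M_N(t_+)-m_N(t_+)|\ge\Delta$.

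For the lower tail, suppose $\tau_{N,\theta}<t_-$. Then there is some $s<t_-$ with $M_N(s)>\theta$. Since $M_N$ is nondecreasing, $M_N(t_-)\ge M_N(s)>\theta\ge m_N(t_-)+\Delta$. Hence $M_N(t_-)-m_N(t_-)>\Delta$, and again $|M_N(t_-)-m_N(t_-)|\ge\Delta$.

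\emph{Applying Bernstein.} Apply the simplified bound of Theorem~\ref{thm:bernstein} with $x=\Delta$ at $t=t_+$ and at $t=t_-$. Each application gives $2\exp\bigl(-\Delta^2/(2w_N^\ast+\tfrac23 w_N^\ast\Delta)\bigr)$, and a union bound over the two events produces the stated constant $4$.

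The only point needing care is the lower tail: one must pass from ``crossing occurred strictly before $t_-$'' to a statement about $M_N$ at the single time $t_-$. Monotonicity of $M_N$ handles this, together with the fact that the strict inequality $\tau_{N,\theta}<t_-$ guarantees a time $s<t_-$ with $M_N(s)>\theta$. Everything else is routine.
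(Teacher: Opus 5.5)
Your proposal is correct and follows essentially the same route as the paper's proof. Positivity of $\Delta_{N,\theta}(\delta)$ comes from strict monotonicity of $m_N$, each tail of $\tau_{N,\theta}$ reduces via monotonicity of $M_N$ to a deviation at the fixed time $t_{N,\theta}\mp\delta$, and the simplified bound of Theorem~\ref{thm:bernstein} applied to each term, combined with a union bound, yields the constant $4$.
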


\begin{proof}
Since $m_N$ is strictly increasing and $m_N(t_{N,\theta})=\theta$, one has
\[
m_N(t_{N,\theta}-\delta)<\theta<m_N(t_{N,\theta}+\delta),
\]
so $\Delta_{N,\theta}(\delta)>0$. If $\tau_{N,\theta} < t_{N,\theta}-\delta$, then by monotonicity of $M_N$,
\[
M_N(t_{N,\theta}-\delta)>\theta.
\]
Hence
\[
M_N(t_{N,\theta}-\delta)-m_N(t_{N,\theta}-\delta)\ge
\theta-m_N(t_{N,\theta}-\delta)\ge \Delta_{N,\theta}(\delta).
\]
Similarly, if $\tau_{N,\theta} > t_{N,\theta}+\delta$, then
\[
M_N(t_{N,\theta}+\delta)\le \theta,
\]
so
\[
m_N(t_{N,\theta}+\delta)-M_N(t_{N,\theta}+\delta)\ge
m_N(t_{N,\theta}+\delta)-\theta\ge \Delta_{N,\theta}(\delta).
\]
Therefore
\begin{align*}
\Prob\bigl(|\tau_{N,\theta}-t_{N,\theta}| > \delta\bigr)
&\le
\Prob\Bigl(M_N(t_{N,\theta}-\delta)-m_N(t_{N,\theta}-\delta)\ge \Delta_{N,\theta}(\delta)\Bigr)
\\
&\quad+
\Prob\Bigl(m_N(t_{N,\theta}+\delta)-M_N(t_{N,\theta}+\delta)\ge \Delta_{N,\theta}(\delta)\Bigr).
\end{align*}
Applying Theorem~\ref{thm:bernstein} to each term gives
\[
\Prob\bigl(|\tau_{N,\theta}-t_{N,\theta}| > \delta\bigr)
\le
4\exp\!\left(
-\frac{\Delta_{N,\theta}(\delta)^2}
{2w_N^\ast+\frac23 w_N^\ast \Delta_{N,\theta}(\delta)}
\right).
\qedhere
\]
\end{proof}

\begin{corollary}\label{cor:root-comparison}
Let $(\delta_N)_{N \in \N}$ be a positive sequence such that $\delta_N<t_{N,\theta}$ for all sufficiently large $N$ and
\[
\frac{\Delta_{N,\theta}(\delta_N)^2}{w_N^\ast}\xrightarrow[N\to\infty]{}\infty.
\]
Then
\[
\lim_{N \to \infty} \Prob(|\tau_{N,\theta}-t_{N,\theta}| > \delta_N) = 0.
\]
\end{corollary}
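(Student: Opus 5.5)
The plan is to apply Theorem~\ref{thm:root-comparison} with $\delta=\delta_N$ for every sufficiently large $N$, i.e.\ for all $N$ with $\delta_N<t_{N,\theta}$, and then show that the exponent appearing there tends to $-\infty$. Write $\Delta_N:=\Delta_{N,\theta}(\delta_N)$. For such $N$ the theorem yields
\[
\Prob\bigl(|\tau_{N,\theta}-t_{N,\theta}|>\delta_N\bigr)\le 4\exp\!\left(-\frac{\Delta_N^2}{2w_N^\ast+\frac23 w_N^\ast\Delta_N}\right),
\]
so it suffices to show that the quotient inside the exponential diverges.

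The key observation is that $\Delta_N$ is bounded. Since $m_N$ takes values in $[0,1]$, we have $\theta-m_N(t_{N,\theta}-\delta_N)\le\theta\le 1$, and therefore $0<\Delta_N\le 1$. This bounds the denominator:
\[
2w_N^\ast+\tfrac23 w_N^\ast\Delta_N\le \tfrac83 w_N^\ast .
\]
Consequently the quotient is at least $\tfrac38\,\Delta_N^2/w_N^\ast$, which tends to $\infty$ by hypothesis. The right-hand side of the bound therefore tends to $0$, which gives the claim.

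There is no real obstacle here. The only point requiring care is the linear term $\tfrac23 w_N^\ast\Delta_N$ in the denominator. One might worry that it could dominate, but the trivial bound $\Delta_N\le 1$ shows it never does. The hypothesis $\Delta_N^2/w_N^\ast\to\infty$ is then exactly the condition needed.
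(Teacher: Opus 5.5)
Your proposal is correct and is essentially the paper's argument: apply Theorem~\ref{thm:root-comparison} with $\delta=\delta_N$, use $0<\Delta_{N,\theta}(\delta_N)\le 1$ to keep the linear term $\tfrac23 w_N^\ast\Delta_{N,\theta}(\delta_N)$ under control, and conclude that the exponent diverges. The only difference is cosmetic. You bound the denominator by $\tfrac83 w_N^\ast$, whereas the paper divides numerator and denominator by $w_N^\ast$ and notes that $2+\tfrac23\Delta_{N,\theta}(\delta_N)$ stays bounded.
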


\begin{proof}
Since $0<\Delta_{N,\theta}(\delta_N)\le 1$, the exponent in
Theorem~\ref{thm:root-comparison} satisfies
\[
\frac{\Delta_{N,\theta}(\delta_N)^2}
{2w_N^\ast+\frac23 w_N^\ast\Delta_{N,\theta}(\delta_N)}
=
\frac{\Delta_{N,\theta}(\delta_N)^2/w_N^\ast}
{2+\frac23\Delta_{N,\theta}(\delta_N)}
\xrightarrow[N\to\infty]{}\infty.
\]
The conclusion follows from Theorem~\ref{thm:root-comparison}.
\end{proof}

We are now ready to prove a general diffuse law of large numbers.
\begin{theorem}\label{thm:general-diffuse}
Assume that quorum weights are diffuse. Fix $\theta\in(0,1)$ and suppose there exist positive scales $(a_N)_{N\in \N}$ and a continuous, strictly increasing function $F:[0,\infty)\to[0,1)$ with
\[
F(0)=0,
\qquad
\lim_{\alpha\to\infty}F(\alpha)=1,
\]
such that for every $\alpha\ge 0$,
\[
m_N(a_N\alpha)\xrightarrow[N\to\infty]{} F(\alpha).
\]
Let $\alpha_\theta$ be the unique solution of $F(\alpha_\theta)=\theta$. Then
\[
\frac{\tau_{N,\theta}}{a_N}\xrightarrow[N\to\infty]{\Prob}\alpha_\theta.
\]
\end{theorem}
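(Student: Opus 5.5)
We prove the statement directly from the one-point Bernstein bound of Theorem~\ref{thm:bernstein}. We fix $\varepsilon\in(0,\alpha_\theta)$ and set $\alpha_\pm:=\alpha_\theta\pm\varepsilon$. We then show that
\[
\Prob\bigl(\tau_{N,\theta}<a_N\alpha_-\bigr)\to0 \qquad\text{and}\qquad \Prob\bigl(\tau_{N,\theta}>a_N\alpha_+\bigr)\to0,
\]
which is exactly the claimed convergence in probability. Working with the fixed times $a_N\alpha_\pm$, instead of with $t_{N,\theta}\pm\delta_N$ as in Corollary~\ref{cor:root-comparison}, avoids having to control the root $t_{N,\theta}$ itself.

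The first step fixes a deterministic gap. Since $F$ is strictly increasing and $F(\alpha_\theta)=\theta$, we have $F(\alpha_-)<\theta<F(\alpha_+)$. Set
\[
\eta:=\tfrac12\min\{\theta-F(\alpha_-),\,F(\alpha_+)-\theta\}>0.
\]
By the assumed pointwise convergence $m_N(a_N\alpha)\to F(\alpha)$, applied only at the two points $\alpha=\alpha_\pm$, there is an $N_0$ such that for $N\ge N_0$
\[
m_N(a_N\alpha_-)\le\theta-\eta \qquad\text{and}\qquad m_N(a_N\alpha_+)\ge\theta+\eta.
\]
No uniformity in $\alpha$ is needed.

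The second step turns this gap into a probability bound using the monotonicity of $M_N$, as in the proof of Theorem~\ref{thm:root-comparison}.
\begin{itemize}
\item[(a)] On $\{\tau_{N,\theta}<a_N\alpha_-\}$ we have $M_N(a_N\alpha_-)>\theta$. Hence $M_N(a_N\alpha_-)-m_N(a_N\alpha_-)\ge\eta$.
\item[(b)] On $\{\tau_{N,\theta}>a_N\alpha_+\}$ we have $M_N(a_N\alpha_+)\le\theta$. Hence $m_N(a_N\alpha_+)-M_N(a_N\alpha_+)\ge\eta$.
\end{itemize}
Theorem~\ref{thm:bernstein} with $x=\eta$ bounds each of these probabilities by
\[
2\exp\bigl(-\eta^2/(2w_N^\ast+\tfrac23 w_N^\ast\eta)\bigr).
\]
This tends to $0$ because $w_N^\ast\to0$ by diffuseness, while $\eta$ does not depend on $N$.

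There is no real obstacle here. The only points needing care are the strict inequalities $F(\alpha_-)<\theta<F(\alpha_+)$, which come from strict monotonicity of $F$, and the observation that $\alpha_\theta$ exists and is unique. Existence and uniqueness follow from continuity of $F$, $F(0)=0<\theta$, $F\to1>\theta$, and strict monotonicity. Since the argument uses only two fixed evaluation points of $m_N$, the pointwise hypothesis suffices.
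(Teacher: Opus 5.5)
Your proposal is correct and follows essentially the same route as the paper: you fix $\alpha_\pm=\alpha_\theta\pm\varepsilon$ and extract a gap $\eta$ from strict monotonicity of $F$ and pointwise convergence at the two points $\alpha_\pm$. You then use monotonicity of $M_N$ to turn $\{\tau_{N,\theta}<a_N\alpha_-\}$ and $\{\tau_{N,\theta}>a_N\alpha_+\}$ into deviations of size at least $\eta$, and close with Theorem~\ref{thm:bernstein} and $w_N^\ast\to0$; the only cosmetic difference is that you take $\varepsilon<\alpha_\theta$, whereas the paper uses $\alpha_-=\max\{\alpha_\theta-\varepsilon,0\}$.
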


\begin{proof}
Fix $\varepsilon>0$ and set
\[
\alpha_-:=\max\{\alpha_\theta-\varepsilon,0\},
\qquad
\alpha_+:=\alpha_\theta+\varepsilon.
\]
Since $F$ is strictly increasing and continuous,
\[
F(\alpha_-)<\theta<F(\alpha_+).
\]
Choose
\[
\eta:=\frac12\min\{\theta-F(\alpha_-),\,F(\alpha_+)-\theta\}>0.
\]
By the assumed convergence of the mean profile, for all sufficiently large $N$,
\[
m_N(a_N\alpha_-)\le \theta-\eta,
\qquad
m_N(a_N\alpha_+)\ge \theta+\eta.
\]
Hence
\begin{align*}
\Prob\!\left(\tau_{N,\theta}\le a_N\alpha_-\right)
&=
\Prob\!\left(M_N(a_N\alpha_-)>\theta\right) \\
&\le
\Prob\!\left(M_N(a_N\alpha_-)-m_N(a_N\alpha_-)\ge \eta\right)
\xrightarrow[N\to\infty]{} 0,
\end{align*}
and similarly
\[
\Prob\!\left(\tau_{N,\theta}> a_N\alpha_+\right)\xrightarrow[N\to\infty]{} 0,
\]
because Theorem~\ref{thm:bernstein} and $w_N^\ast\xrightarrow[N\to\infty]{} 0$ imply
\[
\Prob\bigl(|M_N(t)-m_N(t)|\ge \eta\bigr)\xrightarrow[N\to\infty]{} 0
\]
uniformly in $t$. Therefore
\[
\Prob\!\left(\alpha_\theta-\varepsilon\le \frac{\tau_{N,\theta}}{a_N}\le \alpha_\theta+\varepsilon\right)\xrightarrow[N\to\infty]{} 1.
\]
Since $\varepsilon>0$ was arbitrary, the claim follows.
\end{proof}

\section{Homogeneous Clocks}\label{sec:Homogeneous _Clocks}
Assume throughout this section that
\[
p_i=\frac1N,
\qquad i\in[N].
\]
The quorum weights remain arbitrary, but diffuse (see Definition \ref{def:diffuse_vs_atomic}).

\begin{theorem}\label{thm:homogeneous}
Fix $\theta\in(0,1)$ and assume the quorum weights are diffuse. Then
\[
\frac{\tau_{N,\theta}}{N}\xrightarrow[N\to\infty]{\Prob}\frac{-\log(1-\theta)}{\lambda}.
\]
\end{theorem}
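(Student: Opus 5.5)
The plan is to apply Theorem~\ref{thm:general-diffuse} with the scale $a_N=N$. Under homogeneous clocks the mean profile \eqref{eq:mean_profile} collapses, because every factor $1-e^{-\lambda p_i t}$ is the same:
\[
m_N(N\alpha)=\sum_{i=1}^N w_i\bigl(1-e^{-\lambda \alpha}\bigr)=1-e^{-\lambda\alpha},
\]
using $\sum_i w_i=1$. This holds exactly for every $N$ and every weight vector, so the convergence hypothesis is satisfied with
\[
F(\alpha):=1-e^{-\lambda\alpha}.
\]
The limit is trivial here; this is also why the answer is universal in $w$.

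The next step is to check the hypotheses on $F$. It is continuous and strictly increasing on $[0,\infty)$. It satisfies $F(0)=0$, takes values in $[0,1)$, and $F(\alpha)\to1$ as $\alpha\to\infty$. Solving $F(\alpha_\theta)=\theta$ gives $\alpha_\theta=-\log(1-\theta)/\lambda$. The remaining hypothesis of Theorem~\ref{thm:general-diffuse} is diffuseness of the weights, which is assumed. The theorem then yields $\tau_{N,\theta}/N\to\alpha_\theta$ in probability, which is the claim.

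I expect essentially no obstacle. The only point worth a remark is that the whole weight dependence cancels in the mean, while $w$ still enters through the concentration step, via $w_N^\ast\to0$ in Theorem~\ref{thm:bernstein}. Alternatively, one could note that $t_{N,\theta}=N\alpha_\theta$ exactly and apply Corollary~\ref{cor:root-comparison} with $\delta_N=\varepsilon N$. In that case $\Delta_{N,\theta}(\delta_N)$ is a positive constant independent of $N$, so $\Delta^2/w_N^\ast\to\infty$ follows directly from diffuseness.
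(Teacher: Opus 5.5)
Your proof is correct and is essentially the paper's argument: the paper also applies Theorem~\ref{thm:general-diffuse} via the exact identity $m_N(t)=1-e^{-\lambda t/N}$, but uses the scale $a_N=N/\lambda$ with $F(\alpha)=1-e^{-\alpha}$ instead of your $a_N=N$ with $F(\alpha)=1-e^{-\lambda\alpha}$, which is only a cosmetic reparametrization. Your alternative route via Corollary~\ref{cor:root-comparison} is also valid, provided you take $\varepsilon<\alpha_\theta$ so that $\delta_N=\varepsilon N<t_{N,\theta}$.
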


\begin{proof}
For every $t\ge 0$, by \eqref{eq:mean_profile}, we have
\[
m_N(t)
=
\sum_{i=1}^N w_i\bigl(1-e^{-\lambda t/N}\bigr)
=
1-e^{-\lambda t/N}.
\]
Hence, with $a_N=N/\lambda$,
\[
m_N(a_N\alpha)=1-e^{-\alpha}
\qquad \text{for all }\alpha\ge 0.
\]
The function
\[
F(\alpha):=1-e^{-\alpha}
\]
is continuous, strictly increasing, satisfies $F(0)=0$, and tends to $1$ as $\alpha\to\infty$. Its unique root at level $\theta$ is
\[
\alpha_\theta=-\log(1-\theta).
\]
Theorem~\ref{thm:general-diffuse} therefore applies and yields the claim.
\end{proof}

\begin{remark}
For homogeneous clocks, \(m_N(t)=1-e^{-\lambda t/N}\), independently of \(w\).
The condition \(\max_i w_i\to0\) is what turns this mean comparison into a
law of large numbers. Within this class of weights, the leading term of
\(\tau_{N,\theta}\) depends only on \(\theta\) and \(\lambda\).
\end{remark}

\section{The Diffuse Zipf Regime: $0\le s<1$}\label{sec:The_Diffuse_Aligned_Zipf_Regime}
Fix $s\in[0,1)$ and define
\[
q_{i,N}^{(s)}:=\frac{i^{-s}}{H_N^{(s)}},
\qquad
H_N^{(s)}:=\sum_{j=1}^N j^{-s}.
\]
We consider the aligned Zipf model where
\[
p_i=w_i=q_{i,N}^{(s)},
\qquad i\in[N].
\]
Let $\tau_{N,\theta}^{(s)}$ and $M_N^{(s)}$ be the corresponding quorum time and discovered-mass process.
\begin{remark}
    Notice that
    \begin{equation*}
        w_N^\ast = \max_{1 \le i \le N} w_i = \frac{1}{H_N^{(s)}} \xrightarrow[N \to \infty]{} 0,
    \end{equation*}
    since $H_N^{(s)}=\sum_{j=1}^N j^{-s}\to\infty$ for $s<1$.
\end{remark}

As is shown in Theorem \ref{thm:diffuse} below, the behavior of the quorum time in this regime is still linear (as in the homogeneous clock case). The heuristic behind it is that for $0\le s<1$, the quorum weight remains spread across a macroscopic fraction of the population. Reaching a threshold therefore still requires observing order-$N$ many types, even though the observation rates are heterogeneous. One should thus expect the same linear time scale $N$ as in the homogeneous case, together with a deterministic limit profile depending on $s$.

Before proving the main result of this section, we start with an auxiliary lemma about a particular function appearing in the proof of Theorem~\ref{thm:diffuse}.
\begin{lemma}\label{lem:Fs}
Fix $s\in[0,1)$ and define
\[
F_s(\alpha):=
(1-s)\int_0^1 x^{-s}\Bigl(1-e^{-(1-s)\alpha x^{-s}}\Bigr)\,dx,
\qquad
\alpha\ge 0.
\]
Then $F_s$ is continuous, strictly increasing, satisfies $F_s(0)=0$, and
\[
\lim_{\alpha\to\infty}F_s(\alpha)=1.
\]
Hence, for every $\theta\in(0,1)$ there exists a unique $\alpha_{s,\theta}>0$ such that
\[
F_s(\alpha_{s,\theta})=\theta.
\]
\end{lemma}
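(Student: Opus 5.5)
The plan is to analyse the integrand
\[
g_\alpha(x):=(1-s)x^{-s}\bigl(1-e^{-(1-s)\alpha x^{-s}}\bigr), \qquad x\in(0,1],\ \alpha\ge 0,
\]
and obtain every property of $F_s$ from dominated convergence. The single dominating function will be
\[
0\le g_\alpha(x)\le (1-s)x^{-s}.
\]
It is integrable on $(0,1]$ because $s<1$, and in fact $\int_0^1 (1-s)x^{-s}\,dx=1$.

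First, $F_s(0)=0$ is immediate, since $g_0\equiv 0$. For continuity, fix $\alpha_n\to\alpha$. For every $x\in(0,1]$ we have $g_{\alpha_n}(x)\to g_\alpha(x)$, and the bound above dominates all $g_{\alpha_n}$, so $F_s(\alpha_n)\to F_s(\alpha)$.

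For the limit at infinity, note that for each fixed $x\in(0,1]$ we have $(1-s)\alpha x^{-s}\ge (1-s)\alpha\to\infty$. Hence $g_\alpha(x)\to (1-s)x^{-s}$ pointwise, and dominated convergence gives
\[
F_s(\alpha)\to\int_0^1(1-s)x^{-s}\,dx=1.
\]
Strict monotonicity is simple: if $\alpha<\beta$, then $g_\alpha(x)<g_\beta(x)$ for every $x\in(0,1]$, because $x^{-s}>0$ and $u\mapsto 1-e^{-u}$ is strictly increasing. Integrating a strictly positive difference over a set of positive measure gives $F_s(\alpha)<F_s(\beta)$. The same reasoning shows $F_s(\alpha)<1$ for all $\alpha$, since $g_\alpha(x)<(1-s)x^{-s}$ everywhere, so $F_s$ maps $[0,\infty)$ into $[0,1)$.

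Finally, existence and uniqueness of $\alpha_{s,\theta}$ follow from the intermediate value theorem applied to the continuous, strictly increasing $F_s$ with $F_s(0)=0<\theta<1=\lim_{\alpha\to\infty}F_s(\alpha)$. Positivity $\alpha_{s,\theta}>0$ holds because $F_s(0)=0\ne\theta$.

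There is no real obstacle here. The only point needing care is the singularity of $x^{-s}$ at $0$, and the integrable majorant handles it uniformly. The case $s=0$ is trivial: there $F_0(\alpha)=1-e^{-\alpha}$, which matches Theorem~\ref{thm:homogeneous}.
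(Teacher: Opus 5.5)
Your proof is correct and follows essentially the same route as the paper: dominated convergence with the integrable majorant $(1-s)x^{-s}$ gives continuity and the limit at infinity, the strict pointwise increase of the integrand gives strict monotonicity, and continuity plus strict monotonicity give existence and uniqueness of $\alpha_{s,\theta}$. Your extra remarks, that $F_s<1$ everywhere and that $\alpha_{s,\theta}>0$ because $F_s(0)=0\neq\theta$, just spell out steps the paper leaves implicit.
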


\begin{proof}
The integrand is continuous in $\alpha$ for every $x\in(0,1]$ and bounded above by $(1-s)x^{-s}$, which is integrable on $(0,1]$. Continuity of $F_s$ therefore follows from dominated convergence. Strict monotonicity follows because for $\alpha_2>\alpha_1$ the integrand at $\alpha_2$ is strictly larger than the integrand at $\alpha_1$ for every $x\in(0,1]$. Clearly $F_s(0)=0$. As $\alpha\to\infty$, the integrand converges pointwise to $(1-s)x^{-s}$, again dominated by the same integrable function, so dominated convergence gives
\[
\lim_{\alpha\to\infty}F_s(\alpha)
=(1-s)\int_0^1 x^{-s}\,dx
=1.
\]
The uniqueness of $\alpha_{s,\theta}$ follows from continuity and strict monotonicity.
\end{proof}

\begin{theorem}\label{thm:diffuse}
Fix $s\in[0,1)$ and $\theta\in(0,1)$. Then
\[
\frac{\tau_{N,\theta}^{(s)}}{N}\xrightarrow[N\to\infty]{\Prob}\frac{\alpha_{s,\theta}}{\lambda},
\]
where $\alpha_{s,\theta}$ is the unique solution of
\[
(1-s)\int_0^1 x^{-s}\Bigl(1-e^{-(1-s)\alpha x^{-s}}\Bigr)\,dx=\theta.
\]
\end{theorem}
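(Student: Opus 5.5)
We will apply Theorem~\ref{thm:general-diffuse} with the scale $a_N:=N/\lambda$ and the limit function $F:=F_s$ from Lemma~\ref{lem:Fs}. The weights are diffuse by the preceding remark, and Lemma~\ref{lem:Fs} already shows that $F_s$ has all the required properties: it is continuous and strictly increasing, $F_s(0)=0$, and $F_s(\alpha)\to1$ as $\alpha\to\infty$. Its root at level $\theta$ is $\alpha_{s,\theta}$. Once the pointwise convergence $m_N(N\alpha/\lambda)\to F_s(\alpha)$ is established for every fixed $\alpha\ge0$, Theorem~\ref{thm:general-diffuse} gives $\lambda\tau^{(s)}_{N,\theta}/N\to\alpha_{s,\theta}$ in probability, which is exactly the claim. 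So the whole proof reduces to this convergence of the mean profile.

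By \eqref{eq:mean_profile}, with $p_i=w_i=i^{-s}/H_N^{(s)}$,
\[
m_N\!\left(\tfrac{N\alpha}{\lambda}\right)=\frac{1}{H_N^{(s)}}\sum_{i=1}^N i^{-s}\Bigl(1-e^{-\alpha N i^{-s}/H_N^{(s)}}\Bigr).
\]
We write $i^{-s}=N^{-s}(i/N)^{-s}$ and use the standard asymptotics $H_N^{(s)}\sim N^{1-s}/(1-s)$ (valid for $s\in[0,1)$, e.g.\ by comparison with $\int_1^N x^{-s}\,dx$). This gives
\[
\frac{N i^{-s}}{H_N^{(s)}}=c_N (i/N)^{-s},\qquad c_N:=\frac{N^{1-s}}{H_N^{(s)}}\to 1-s,
\]
and
\[
m_N\!\left(\tfrac{N\alpha}{\lambda}\right)=c_N\cdot\frac1N\sum_{i=1}^N g_N(i/N),\qquad g_N(x):=x^{-s}\bigl(1-e^{-\alpha c_N x^{-s}}\bigr).
\]
This is a Riemann sum, and we want it to converge to $(1-s)\int_0^1 x^{-s}(1-e^{-(1-s)\alpha x^{-s}})\,dx=F_s(\alpha)$.

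The main obstacle is justifying this Riemann-sum limit, since $x^{-s}$ is unbounded near $0$ and $c_N$ varies with $N$. We handle it in three steps.

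\textbf{Dependence on $N$.} We compare $g_N$ with $g(x):=x^{-s}(1-e^{-(1-s)\alpha x^{-s}})$. Since $|e^{-a}-e^{-b}|\le|a-b|$ for $a,b\ge0$, we get $|g_N(x)-g(x)|\le \alpha|c_N-(1-s)|x^{-2s}$. This bound is not summable when $s\ge1/2$, so instead we use $|g_N-g|\le\min\{2x^{-s},\alpha|c_N-(1-s)|x^{-2s}\}$. Alternatively, and more simply, we cut off near zero.

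\textbf{Cutoff near zero.} Fix $\varepsilon\in(0,1)$. Using $0\le g_N(x)\le x^{-s}$, the contribution of the indices $i\le\varepsilon N$ is at most
\[
\frac1N\sum_{i\le \varepsilon N}(i/N)^{-s}\le C\varepsilon^{1-s},
\]
uniformly in $N$, and likewise $\int_0^\varepsilon g\le \varepsilon^{1-s}/(1-s)$.

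\textbf{The remaining range.} On $[\varepsilon,1]$ the functions $g_N$ converge uniformly to $g$, because $c_N\to1-s$ and $x^{-s}$ is bounded there. Moreover $g$ is continuous on $[\varepsilon,1]$, so the Riemann sum over $\varepsilon N<i\le N$ converges to $\int_\varepsilon^1 g$.

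Letting $N\to\infty$ and then $\varepsilon\to0$ yields $m_N(N\alpha/\lambda)\to F_s(\alpha)$. For $\alpha=0$ the convergence is trivial. This completes the hypotheses of Theorem~\ref{thm:general-diffuse}, and the theorem follows.
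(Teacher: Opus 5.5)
Your proposal is correct and follows essentially the same route as the paper: apply Theorem~\ref{thm:general-diffuse} with $a_N=N/\lambda$, rewrite the mean profile as a Riemann sum with the factor $N^{1-s}/H_N^{(s)}\to 1-s$, and justify the limit by uniform convergence on $[\varepsilon,1]$ together with a uniform $O(\varepsilon^{1-s})$ bound on the indices $i\le\varepsilon N$. The only differences are cosmetic. You pull $c_N$ outside the sum, whereas the paper keeps it inside $g_{N,\alpha}$. Your exploratory remark about the $x^{-2s}$ bound is not needed, since the cutoff argument alone suffices.
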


\begin{proof}
We apply Theorem~\ref{thm:general-diffuse} with scale $a_N=N/\lambda$. It remains to verify the convergence of the mean profile. At $\alpha=0$,
\[
m_N^{(s)}(0)=0=F_s(0).
\]
Now fix $\alpha>0$. Since
\[
\frac{H_N^{(s)}}{N^{1-s}}\xrightarrow[N \to \infty]{}\frac{1}{1-s},
\]
the quantity
\[
b_N:=\frac{N^{1-s}}{H_N^{(s)}}
\]
satisfies $b_N\xrightarrow[N \to \infty]{} 1-s$.

Writing $x_i=i/N$, and denoting by $m_N^{(s)}(t) = \E[M_N^{(s)}(t)]$, we obtain
\begin{align*}
m_N^{(s)}\!\left(\frac{N\alpha}{\lambda}\right)
&=
\sum_{i=1}^N q_{i,N}^{(s)}\Bigl(1-e^{-\alpha N q_{i,N}^{(s)}}\Bigr)
\\
&=
\frac1N \sum_{i=1}^N
b_N x_i^{-s}\Bigl(1-e^{-\alpha b_N x_i^{-s}}\Bigr).
\end{align*}
Define
\[
g_{N,\alpha}(x):=b_N x^{-s}\Bigl(1-e^{-\alpha b_N x^{-s}}\Bigr),
\qquad x\in(0,1].
\]
Then $g_{N,\alpha}(x)\xrightarrow[N \to \infty]{} g_\alpha(x):=(1-s)x^{-s}\bigl(1-e^{-(1-s)\alpha x^{-s}}\bigr)$ pointwise, and for large $N$,
\[
0\le g_{N,\alpha}(x)\le C x^{-s},
\]
where $Cx^{-s}$ is integrable on $(0,1]$.

Fix $\varepsilon\in(0,1)$. On $[\varepsilon,1]$, the functions $g_{N,\alpha}$ converge uniformly to $g_\alpha$, so
\[
\frac1N \sum_{i=\lceil \varepsilon N\rceil}^N g_{N,\alpha}(i/N)
\xrightarrow[N \to \infty]{}
\int_\varepsilon^1 g_\alpha(x)\,dx.
\]
For the lower tail,
\[
0\le \frac1N \sum_{i=1}^{\lceil \varepsilon N\rceil-1} g_{N,\alpha}(i/N)
\le
\frac{C}{N}\sum_{i=1}^{\lceil \varepsilon N\rceil-1}(i/N)^{-s},
\]
and the right-hand side is bounded by a constant multiple of $\int_0^\varepsilon x^{-s}\,dx$, which tends to $0$ as $\varepsilon\downarrow 0$. The same bound controls $\int_0^\varepsilon g_\alpha(x)\,dx$. Hence
\[
\frac1N \sum_{i=1}^N g_{N,\alpha}(i/N)\xrightarrow[N \to \infty]{} \int_0^1 g_\alpha(x)\,dx = F_s(\alpha).
\]
Hence
\[
m_N^{(s)}\!\left(\frac{N\alpha}{\lambda}\right)\xrightarrow[N \to \infty]{} F_s(\alpha).
\]
Lemma~\ref{lem:Fs} gives the unique root $\alpha_{s,\theta}$ of $F_s(\alpha)=\theta$, so Theorem~\ref{thm:general-diffuse} applies and yields the claim.
\end{proof}

\begin{remark}
For $s=0$ one recovers the uniform case
\[
F_0(\alpha)=1-e^{-\alpha},
\qquad
\alpha_{0,\theta}=-\log(1-\theta),
\]
so we have
\[
\frac{\tau_{N,\theta}^{(0)}}{N}\xrightarrow[N\to \infty]{\Prob} -\frac{\log(1-\theta)}{\lambda}.
\]
\end{remark}
The following theorem shows a local heterogeneity effect near $s=0$.
\begin{theorem}\label{thm:local-monotonicity}
Fix $\theta\in(0,1)$ and let $\alpha_{s,\theta}$ be defined by
\[
F_s(\alpha_{s,\theta})=\theta.
\]
Then, as $s\downarrow 0$,
\[
\alpha_{s,\theta}
=
\alpha_{0,\theta}
+
\frac12\,\alpha_{0,\theta}\bigl(\alpha_{0,\theta}-2\bigr)s^2
+o(s^2),
\qquad
\alpha_{0,\theta}=-\log(1-\theta).
\]
In particular, with
\[
\theta_c:=1-e^{-2},
\]
one has:
\begin{itemize}
\item if $\theta>\theta_c$, then $\alpha_{s,\theta}>\alpha_{0,\theta}$ for all sufficiently small $s>0$;
\item if $\theta<\theta_c$, then $\alpha_{s,\theta}<\alpha_{0,\theta}$ for all sufficiently small $s>0$.
\end{itemize}
\end{theorem}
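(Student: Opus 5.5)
The plan is to simplify $F_s$ by a change of variables, expand it to second order in $s$ uniformly for $\alpha$ near $\alpha_{0,\theta}$, and then solve $F_s(\alpha_{s,\theta})=\theta$ perturbatively.

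\textbf{Change of variables.} Substituting $y=x^{1-s}$, so that $dy=(1-s)x^{-s}\,dx$, turns the integral of Lemma~\ref{lem:Fs} into
\[
F_s(\alpha)=1-G(s,\alpha),\qquad G(s,\alpha):=\int_0^1 \exp\Bigl(-(1-s)\alpha\, y^{-\beta}\Bigr)\,dy,\qquad \beta=\frac{s}{1-s}.
\]
This form is convenient because the integrand is bounded by $1$ and depends smoothly on $s$ for each $y\in(0,1]$.

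\textbf{Second-order expansion of $G$.} Write $L=\log y$. Using $\beta=s+s^2+O(s^3)$ and $y^{-\beta}=e^{-\beta L}$, one gets
\[
(1-s)y^{-\beta}=1-s-sL+\tfrac12 s^2L^2+O\bigl(s^3(1+|L|)^3\bigr).
\]
Expanding the exponential then gives
\[
\exp\bigl(-(1-s)\alpha y^{-\beta}\bigr)=e^{-\alpha}\Bigl[1+\alpha s(1+L)+s^2\Bigl(\tfrac{\alpha^2}{2}(1+L)^2-\tfrac{\alpha}{2}L^2\Bigr)\Bigr]+R(s,\alpha,y).
\]
Integrate over $y\in(0,1)$ using $\int_0^1L\,dy=-1$ and $\int_0^1L^2\,dy=2$. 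These give $\int_0^1(1+L)\,dy=0$ and $\int_0^1(1+L)^2\,dy=1$. Hence
\[
F_s(\alpha)=1-e^{-\alpha}-\tfrac12 s^2 e^{-\alpha}\alpha(\alpha-2)+O(s^3).
\]

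\textbf{Main obstacle: the remainder.} The step I expect to be hardest is showing that $\int_0^1|R|\,dy=O(s^3)$ uniformly for $\alpha$ in a compact neighbourhood of $\alpha_{0,\theta}$, because $L\to-\infty$ as $y\downarrow0$. I would first try Taylor's theorem in $s$ applied to $h(s,y)=\exp(-(1-s)\alpha y^{-s/(1-s)})$ for $s\in[0,1/2]$. Each $s$-derivative of $h$ is a polynomial in $L$, $\alpha$ and $u:=y^{-\beta}$, multiplied by $e^{-(1-s)\alpha u}$. Since $u^k e^{-cu}$ is bounded for $c\ge \alpha/2$, the third derivative should be bounded by $C(1+|L|)^6$. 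This bound is integrable on $(0,1)$, which would give the uniform $O(s^3)$ estimate. The same argument also yields $\partial_\alpha F_s(\alpha)=e^{-\alpha}+O(s)$, uniformly on that compact set.

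\textbf{Solving for the root.} Since $\partial_\alpha F_s$ is bounded away from $0$ near $\alpha_{0,\theta}$, and $F_s(\alpha_{0,\theta})=\theta+O(s^2)$, we get $\alpha_{s,\theta}\to\alpha_{0,\theta}$. Write $\alpha_{s,\theta}=\alpha_{0,\theta}+\delta$ and expand $F_s(\alpha_{0,\theta}+\delta)=\theta$ to first order in $\delta$. The leading terms give
\[
e^{-\alpha_{0,\theta}}\delta=\tfrac12 s^2e^{-\alpha_{0,\theta}}\alpha_{0,\theta}(\alpha_{0,\theta}-2)+O(s^3+\delta^2+s^2\delta),
\]
so $\delta=\tfrac12\alpha_{0,\theta}(\alpha_{0,\theta}-2)s^2+o(s^2)$, which is the claimed expansion.

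\textbf{The dichotomy.} The sign of the $s^2$ coefficient is the sign of $\alpha_{0,\theta}-2$, because $\alpha_{0,\theta}>0$. Since $\alpha_{0,\theta}=-\log(1-\theta)$, we have $\alpha_{0,\theta}>2$ if and only if $\theta>1-e^{-2}=\theta_c$. When the coefficient is nonzero, it dominates the $o(s^2)$ error for small $s$, which gives both bullet points.
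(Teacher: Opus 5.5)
Your proposal is correct and follows essentially the paper's argument: a second-order expansion of $F_s$ in $s$ around the uniform case, in which the first-order term vanishes and the second-order coefficient is $\alpha(2-\alpha)e^{-\alpha}$, followed by inversion at $\alpha_{0,\theta}$ using $\partial_\alpha F_0(\alpha_{0,\theta})=e^{-\alpha_{0,\theta}}>0$; your vanishing identity $\int_0^1(1+\log y)\,dy=0$ is the paper's $\int_0^\infty e^{-u}(u-1)\,du=0$ under $y=e^{-u}$. The differences are only in execution: you use the $s$-dependent substitution $y=x^{1-s}$ to absorb the Jacobian, and you invert directly with an explicit third-derivative remainder, which gives $O(s^3)$; the paper instead works with $x=e^{-u}$, justifies $C^2$ regularity by dominated convergence, and applies the implicit function theorem.
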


\begin{proof}
Define
\[
\mathcal{F}(s,\alpha):=
(1-s)\int_0^1 x^{-s}\Bigl(1-e^{-(1-s)\alpha x^{-s}}\Bigr)\,dx,
\]
so that $\mathcal{F}(s,\alpha)=F_s(\alpha)$ for $s\in[0,1)$. With the change of variables $x=e^{-u}$,
\[
\mathcal{F}(s,\alpha)
=
\int_0^\infty G(s,u;\alpha)\,du,
\]
where
\[
G(s,u;\alpha):=(1-s)e^{-(1-s)u}\Bigl(1-e^{-(1-s)\alpha e^{su}}\Bigr).
\]
For $\alpha$ in a compact interval and $|s|\le 1/4$, all partial derivatives of $G(s,u;\alpha)$ up to order two in the variables $(s,\alpha)$ are dominated by a constant multiple of $e^{-u/2}(1+u^2)$. Thus $\mathcal F$ is $C^2$ in a neighborhood of $(0,\alpha_{0,\theta})$, and differentiation under the integral is justified by dominated
convergence. At $s=0$,
\[
\mathcal{F}(0,\alpha)=1-e^{-\alpha},
\qquad
\partial_\alpha \mathcal{F}(0,\alpha)=e^{-\alpha}.
\]
Moreover,
\[
\partial_s G(0,u;\alpha)
=
e^{-u}(u-1)\bigl(1-e^{-\alpha}+\alpha e^{-\alpha}\bigr),
\]
so
\[
\partial_s \mathcal{F}(0,\alpha)=0
\]
because $\int_0^\infty e^{-u}(u-1)\,du=0$. A second differentiation gives
\[
\partial_{ss}G(0,u;\alpha)
=
e^{-u}\Bigl[(u^2-2u)\bigl(1-e^{-\alpha}+\alpha e^{-\alpha}\bigr)
+
\alpha(2-\alpha)e^{-\alpha}(u-1)^2\Bigr].
\]
Using
\[
\int_0^\infty e^{-u}(u^2-2u)\,du=0,
\qquad
\int_0^\infty e^{-u}(u-1)^2\,du=1,
\]
we obtain
\[
\partial_{ss}\mathcal{F}(0,\alpha)=\alpha(2-\alpha)e^{-\alpha}.
\]
Since
\[
\mathcal{F}(0,\alpha_{0, \theta})=\theta
\qquad\text{and}\qquad
\partial_\alpha\mathcal{F}(0,\alpha_{0, \theta})=e^{-\alpha_{0, \theta}}>0,
\]
the implicit function theorem yields a $C^2$ map $s\mapsto \alpha_{s,\theta}$ near $0$ such that
\[
\mathcal{F}(s,\alpha_{s,\theta})=\theta.
\]
Differentiating once gives
\[
\frac{d}{ds}\alpha_{s,\theta}=-\frac{\partial_s\mathcal{F}(s,\alpha_{s,\theta})}{\partial_\alpha\mathcal{F}(s,\alpha_{s,\theta})},
\]
hence $\frac{d}{ds} \alpha_{s,\theta}|_{s = 0}=0$. Differentiating a second time and evaluating at $s=0$ yields
\[
\frac{d^2}{ds^2}\alpha_{s,\theta}\Big|_{s = 0}
=
-\frac{\partial_{ss}\mathcal{F}(0,\alpha_{0, \theta})}{\partial_\alpha\mathcal{F}(0,\alpha_{0, \theta})}
=
\alpha_{0, \theta}(\alpha_{0, \theta}-2).
\]
Taylor's formula therefore gives
\[
\alpha_{s,\theta}
=
\alpha_{0, \theta}+\frac12\,\alpha_{0, \theta}(\alpha_{0, \theta}-2)s^2+o(s^2).
\]
Since $\alpha_{0, \theta}>2$ is equivalent to $\theta>1-e^{-2}$, the sign conclusions follow immediately.
\end{proof}

\section{The Critical Zipf Regime: $s=1$} \label{sec:The_Critical_Aligned_Zipf_Regime}
For the harmonic aligned Zipf family
\[
q_{i,N}^{(1)}=\frac{1}{iH_N},
\qquad
H_N = H_N^{(1)} =\sum_{j=1}^N \frac1j,
\]
with
\[
p_i=w_i=q_{i,N}^{(1)},
\qquad i\in[N],
\]
the natural scale is no longer linear in $N$. Instead, the leading order depends on the threshold through a power law. More precisely, the appropriate scaling of the quorum time is $N^\theta H_N$. A simple heuristic explains the scale $N^\theta H_N$. At time $t=H_NA/\lambda$, type $i$ has been seen with probability $1-e^{-A/i}$. Thus, types with $i\lesssim A$ have essentially already appeared, while types with $i\gg A$ contribute only weakly. The discovered weight is therefore roughly the harmonic mass of the first $A$ types,
\[
\frac{1}{H_N}\sum_{i\le A}\frac1i
\approx \frac{\log A}{\log N}.
\]
To make this mass equal to the threshold $\theta$, one should choose $\log A\approx \theta\log N$, that is $A\approx N^\theta$. Since $t=H_NA/\lambda$, this predicts the critical scale $N^\theta H_N$. Theorem~\ref{thm:critical} below makes this heuristic precise and also identifies the leading constant. We first show the mean asymptotics in the critical case $s = 1$.

\begin{lemma}\label{lem:critical-mean}
Fix $\beta\in(0,1)$ and $c>0$. Then
\[
m_N^{(1)}\!\left(\frac{cH_N N^\beta}{\lambda}\right)
=
\beta+\frac{\log c+(2-\beta)\gamma}{H_N}+o\!\left(\frac1{H_N}\right),
\]
where $\gamma$ denotes Euler's constant.
\end{lemma}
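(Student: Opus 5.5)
The plan is to reduce the statement to a single asymptotic for a harmonic-type sum and then evaluate that sum by Riemann-sum approximations. Put $t=cH_NN^\beta/\lambda$. By \eqref{eq:mean_profile} with $p_i=w_i=1/(iH_N)$ we have $\lambda p_i t = A/i$, where $A:=cN^\beta$. Hence
\[
H_N\, m_N^{(1)}(t)=S_N(A):=\sum_{i=1}^N \frac1i\bigl(1-e^{-A/i}\bigr).
\]
Since $\beta H_N=\beta\log N+\beta\gamma+o(1)$ and $\log A=\log c+\beta\log N$, the claim is equivalent to
\[
S_N(A)=\log A+2\gamma+o(1)\qquad (N\to\infty).
\]
Dividing by $H_N$ then gives exactly $\beta+\bigl(\log c+(2-\beta)\gamma\bigr)/H_N+o(1/H_N)$.

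\emph{Step 1: remove the cutoff at $N$.} Let $S(A)$ be the full series over $i\ge1$. Since $1-e^{-y}\le y$, the tail satisfies
\[
\sum_{i>N}\frac1i\bigl(1-e^{-A/i}\bigr)\le A\sum_{i>N} i^{-2}\le \frac{A}{N}=cN^{\beta-1}\to0,
\]
because $\beta<1$. So it suffices to prove $S(A)=\log A+2\gamma+o(1)$ as $A\to\infty$.

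\emph{Step 2: split at $i=A$.} Write
\[
S(A)=\sum_{i\le A}\frac1i-\sum_{i\le A}\frac{e^{-A/i}}{i}+\sum_{i>A}\frac1i\bigl(1-e^{-A/i}\bigr).
\]
The first sum equals $\log A+\gamma+o(1)$. The other two are Riemann sums with mesh $1/A$, namely $\frac1A\sum h(i/A)$. For the second sum take $h(x)=e^{-1/x}/x$ on $(0,1]$. For the third take $h(x)=(1-e^{-1/x})/x$ on $(1,\infty)$, which is decreasing and $O(x^{-2})$. One checks that $h$ is increasing on $(0,1]$ and extends continuously by $0$ at $x=0$. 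By monotonicity, each Riemann sum differs from the corresponding integral by $O(\sup h/A)=O(1/A)$; the unbounded range in the third sum is handled by the integrable decreasing tail. Substituting $y=1/x$, the two integrals are
\[
\int_0^1\frac{e^{-1/x}}{x}\,dx=\int_1^\infty\frac{e^{-y}}{y}\,dy,
\qquad
\int_1^\infty\frac{1-e^{-1/x}}{x}\,dx=\int_0^1\frac{1-e^{-y}}{y}\,dy.
\]

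\emph{Step 3: identify the constant.} It remains to use the classical identity
\[
\int_0^1\frac{1-e^{-y}}{y}\,dy-\int_1^\infty\frac{e^{-y}}{y}\,dy=\gamma .
\]
This can be proved by integrating by parts to obtain $-\int_0^\infty e^{-y}\log y\,dy=-\Gamma'(1)=\gamma$. Combining the pieces gives $S(A)=\log A+\gamma+\gamma+o(1)$, as required.

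The main obstacle is Step 2: making the Riemann-sum error bounds rigorous near the split point $i\approx A$ (with $A$ not an integer) and at the endpoints. I would handle this with the standard comparison between the sum and the integral for monotone functions, using that each $h$ is bounded and monotone on its piece. The remaining steps are routine.
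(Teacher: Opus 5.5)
Your proof is correct, but it reaches the key reduction $S_N=\log A_N+2\gamma+o(1)$ by a different route from the paper.

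The paper never splits the sum. It writes $1-e^{-A_N/i}=\int_0^{A_N/i}e^{-u}\,du$ and swaps sum and integral (Tonelli), which gives $S_N=\int_0^\infty e^{-u}H_{\min\{N,\lfloor A_N/u\rfloor\}}\,du$. It then inserts the expansion $H_m=\log m+\gamma+O(1/m)$ under the integral and uses $\int_0^\infty e^{-u}\log u\,du=-\gamma$. In that form the constant $2\gamma$ is transparent: one $\gamma$ comes from the harmonic expansion and one from $\Gamma'(1)$. No monotonicity of auxiliary functions or Riemann-sum bookkeeping is needed. The price is a separate estimate on the region $u\le A_N/N$, where the cap at $N$ and the singularity of $\log u$ must be handled explicitly.

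You instead remove the cutoff at $N$ at the outset with the one-line bound $A/N$, so the rest is a clean statement about the infinite series $S(A)$ as $A\to\infty$. You then split at $i=A$ and evaluate the two non-harmonic pieces as monotone Riemann sums with $O(1/A)$ error. Your second $\gamma$ comes from the classical identity $\int_0^1\frac{1-e^{-y}}{y}\,dy-\int_1^\infty\frac{e^{-y}}{y}\,dy=\gamma$, which, as you note, reduces to $-\Gamma'(1)$ after integration by parts. So both arguments ultimately rest on $\Gamma'(1)=-\gamma$.

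Your version is somewhat longer but fully elementary, and it makes the error terms plainly visible as $O(1/A)+O(A/N)$. Your monotonicity claims are correct: $e^{-1/x}/x$ is increasing on $(0,1]$, and $(1-e^{-1/x})/x$ is decreasing and $O(x^{-2})$ on $(1,\infty)$. The boundary mismatch at a non-integer split point costs at most $O(1/A)$ because both integrands are bounded near $x=1$. So the ``main obstacle'' you flag is indeed routine.
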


\begin{proof}
Set
\begin{equation}\label{eq:def_of_A_N_and_t_N}
    A_N:=cN^\beta, \qquad t_N:=\frac{H_NA_N}{\lambda}.
\end{equation}
Then, by \eqref{eq:mean_profile}
\[
m_N^{(1)}(t_N)
=
\frac1{H_N}\sum_{i=1}^N \frac{1-e^{-A_N/i}}{i}.
\]
Write
\[
S_N:=\sum_{i=1}^N \frac{1-e^{-A_N/i}}{i}.
\]
Using
\[
1-e^{-A_N/i}=\int_0^{A_N/i} e^{-u}\,du
\]
and Tonelli's theorem, we obtain
\[
S_N
=
\int_0^\infty e^{-u}\sum_{i=1}^N \frac1i \1_{\{u\le A_N/i\}}\,du.
\]
Let $H_0:=0$. Since $A_N/N\to 0$,
\[
S_N
=
\int_0^{A_N/N} e^{-u}H_N\,du
+
\int_{A_N/N}^{A_N} e^{-u}H_{\lfloor A_N/u\rfloor}\,du.
\]
The first term is bounded by
\[
H_N\frac{A_N}{N}=cH_NN^{\beta-1}=o(1),
\]
since $\beta < 1$. For $u\in[A_N/N,A_N]$, one has $\lfloor A_N/u\rfloor\ge 1$, and the harmonic expansion
\[
H_m=\log m+\gamma+O(m^{-1})
\]
combined with
\[
\log \lfloor A_N/u\rfloor=\log(A_N/u)+O(u/A_N)
\]
gives
\[
H_{\lfloor A_N/u\rfloor}
=
\log(A_N/u)+\gamma+O(u/A_N),
\]
uniformly on that interval. Hence
\[
S_N
=
\int_{A_N/N}^{A_N} e^{-u}\bigl(\log(A_N/u)+\gamma\bigr)\,du
+O\!\left(\frac1{A_N}\int_0^{A_N} ue^{-u}\,du\right)+o(1).
\]
Since $A_N\to\infty$, the second term is also $o(1)$. Furthermore,
\begin{equation}\label{eq:int_0^A_N/N}
    \int_0^{A_N/N} e^{-u}\bigl|\log(A_N/u)+\gamma\bigr|\,du
\le \int_0^{A_N/N}\big( |\log A_N| + |\log u| + \gamma \big) \,du.
\end{equation}
It holds that
\begin{equation*}
    \int_0^{\varepsilon} |\log u|\,du = \varepsilon (1 - \log \varepsilon), \quad \text{for}\ \varepsilon \in (0, 1).
\end{equation*}
Setting $\varepsilon = A_N/N = cN^{\beta - 1} \xrightarrow[N\to \infty]{} 0$, we have (for $N$ large enough)
\begin{equation}\label{eq:log_u_part}
    \int_0^{A_N/N} |\log u|\, du = \frac{A_N}{N}\left( 1 + \log \frac{N}{A_N} \right) = cN^{\beta - 1}(1 + \log (c^{-1} N^{1-\beta})) = o(1),
\end{equation}
because $\beta<1$. Similarly
\begin{equation}\label{eq:log_AN_and_gamma_part}
    \frac{A_N}{N} (|\log(A_N)| + \gamma) = o(1).
\end{equation}
Plugging \eqref{eq:log_u_part} and \eqref{eq:log_AN_and_gamma_part} back to \eqref{eq:int_0^A_N/N}, and noticing that the tail integral over $[A_N, \infty)$ is exponentially small, we get
\[
S_N
=
\int_0^\infty e^{-u}\bigl(\log(A_N/u)+\gamma\bigr)\,du+o(1).
\]
Using
\[
\int_0^\infty e^{-u}\,du=1
\qquad\text{and}\qquad
\int_0^\infty e^{-u}\log u\,du=-\gamma,
\]
we conclude that
\[
S_N=\log A_N+2\gamma+o(1).
\]
Since
\[
\log A_N=\beta\log N+\log c
\]
and
\[
H_N=\log N+\gamma+o(1),
\]
it follows that
\[
m_N^{(1)}(t_N)
=
\frac{\beta\log N+\log c+2\gamma+o(1)}{H_N}
=
\beta+\frac{\log c+(2-\beta)\gamma}{H_N}+o\!\left(\frac1{H_N}\right).
\]
\end{proof}

After showing the mean asymptotics, we also provide a variance bound in the critical case.
\begin{lemma}[Critical Variance Bound]\label{lem:critical-var}
Fix $\beta\in(0,1)$ and $c>0$. Then
\[
\Var\!\left(M_N^{(1)}\!\left(\frac{cH_NN^\beta}{\lambda}\right)\right)
=
O\!\left(\frac{1}{H_N^2N^\beta}\right).
\]
\end{lemma}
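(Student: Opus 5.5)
Our plan is to compute the variance directly from the exact formula \eqref{eq:Var_MNt} and then bound the resulting sum by splitting it according to whether the exponent $A_N/i$ is large or small. With $A_N=cN^\beta$ and $t_N=H_NA_N/\lambda$ as in \eqref{eq:def_of_A_N_and_t_N}, we have $\lambda p_i t_N = A_N/i$ and $w_i=1/(iH_N)$. Formula \eqref{eq:Var_MNt} therefore becomes
\[
\Var\bigl(M_N^{(1)}(t_N)\bigr)=\frac{1}{H_N^2}\sum_{i=1}^N \frac{1}{i^2}\,e^{-A_N/i}\bigl(1-e^{-A_N/i}\bigr).
\]
It thus suffices to show that $\sum_{i=1}^N i^{-2}e^{-A_N/i}(1-e^{-A_N/i}) = O(1/A_N)$. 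Indeed, $1/A_N = c^{-1}N^{-\beta}$, which gives the claim.

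We first drop the factor $1-e^{-A_N/i}$ using $1-e^{-A_N/i}\le 1$, or alternatively $\le A_N/i$ where that is sharper. We then split the sum at $i=A_N$. For $i\ge A_N$, we bound $e^{-A_N/i}\le 1$ and get
\[
\sum_{i\ge A_N} i^{-2}\le \frac{2}{A_N}
\]
for $A_N\ge 1$. This part is already of the right order.

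For $i<A_N$, the factor $e^{-A_N/i}$ is small. Writing $y=A_N/i\ge 1$, each term equals $A_N^{-2}y^2e^{-y}$. The values $y_i=A_N/i$ for $i<A_N$ are spaced so that a sum comparison works. The cleanest route is an integral comparison: the function $x\mapsto x^{-2}e^{-A_N/x}$ is increasing on $(0,A_N/2]$, so on $[1,A_N/2]$ the sum is bounded by $\int_0^{A_N} x^{-2}e^{-A_N/x}\,dx$. The remaining indices in $[A_N/2,A_N]$ contribute at most $O(A_N\cdot A_N^{-2})$. The substitution $u=A_N/x$ gives
\[
\int_0^{A_N} x^{-2}e^{-A_N/x}\,dx=\frac{1}{A_N}\int_1^\infty e^{-u}\,du=\frac{e^{-1}}{A_N}.
\]
Combining the two ranges, the whole sum is at most a constant times $1/A_N$.

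The main (mild) obstacle is the bookkeeping in this comparison of the sum with the integral, namely handling the non-monotonicity of $x^{-2}e^{-A_N/x}$, whose maximum lies at $x=A_N/2$. We handle it by the split above. A simpler alternative is the crude bound $y^2e^{-y}\le 4e^{-2}$, which gives $O(A_N\cdot A_N^{-2})=O(1/A_N)$ directly for the range $i<A_N$. We would use this crude bound first, since it avoids the integral entirely. Multiplying by $H_N^{-2}$ then gives $O\bigl(1/(H_N^2N^\beta)\bigr)$.
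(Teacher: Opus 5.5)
Your proposal is correct and follows the paper's proof. It uses the same exact variance formula and the same split at $A_N$, and your crude bound $y^2e^{-y}\le 4e^{-2}$ on $i<A_N$ is precisely the paper's estimate $e^{-A_N/i}\le C(i/A_N)^2$. The only difference is immaterial: on $i\ge A_N$ you bound both exponential factors by $1$ and use $\sum_{i\ge A_N}i^{-2}\le 2/A_N$, whereas the paper uses $1-e^{-A_N/i}\le A_N/i$ together with $A_N\sum_{i>A_N}i^{-3}$; both give $O(1/A_N)$, and the integral-comparison detour is not needed.
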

\begin{proof}
Let $A_N = cN^\beta$ and $t_N = (H_N A_N)/\lambda$, as in \eqref{eq:def_of_A_N_and_t_N}.
Then, by \eqref{eq:Var_MNt}
\[
\Var\bigl(M_N^{(1)}(t_N)\bigr)
=
\frac1{H_N^2}\sum_{i=1}^N \frac{e^{-A_N/i}\bigl(1-e^{-A_N/i}\bigr)}{i^2}.
\]
We split the sum at $\lfloor A_N\rfloor$. For $i\le A_N$ we use the fact that
\[
e^{-A_N/i}\le C\left(\frac{i}{A_N}\right)^2
\]
with a universal constant $C$. Hence
\[
\sum_{i\le A_N}\frac{e^{-A_N/i}\bigl(1-e^{-A_N/i}\bigr)}{i^2}
\le
\frac{C}{A_N^2}\sum_{i\le A_N}1
=
O\!\left(\frac1{A_N}\right).
\]
For $i>A_N$ we use $1-e^{-A_N/i}\le A_N/i$, so
\[
\sum_{i>A_N}\frac{e^{-A_N/i}\bigl(1-e^{-A_N/i}\bigr)}{i^2}
\le
A_N\sum_{i>A_N}\frac1{i^3}
=
O\!\left(\frac1{A_N}\right).
\]
Combining the two bounds gives
\[
\Var\bigl(M_N^{(1)}(t_N)\bigr)=O\!\left(\frac1{H_N^2A_N}\right)
=
O\!\left(\frac1{H_N^2N^\beta}\right).
\]
\end{proof}

We are now ready to show the law of large numbers in the case $s = 1$.
\begin{theorem}\label{thm:critical}
Fix $\theta\in(0,1)$ and define
\[
c_\theta:=e^{(\theta-2)\gamma}.
\]
Then
\[
\frac{\tau_{N,\theta}^{(1)}}{H_NN^\theta}
\xrightarrow[N\to\infty]{\Prob}
\frac{c_\theta}{\lambda}.
\]
\end{theorem}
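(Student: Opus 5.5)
Fix $\varepsilon>0$ small and set $c_\pm:=c_\theta e^{\pm\varepsilon}$ and $t_N^\pm:=c_\pm H_NN^\theta/\lambda$. Since $M_N^{(1)}$ is nondecreasing and $\{\tau_{N,\theta}^{(1)}>t\}=\{M_N^{(1)}(t)\le\theta\}$, it suffices to prove
\[
\Prob\bigl(M_N^{(1)}(t_N^-)>\theta\bigr)\xrightarrow[N\to\infty]{}0
\qquad\text{and}\qquad
\Prob\bigl(M_N^{(1)}(t_N^+)\le\theta\bigr)\xrightarrow[N\to\infty]{}0.
\]
These give $\tau_{N,\theta}^{(1)}/(H_NN^\theta)\in[c_-/\lambda,c_+/\lambda]$ with probability tending to one. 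Since $\varepsilon$ is arbitrary, the claim follows.

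Theorem~\ref{thm:general-diffuse} cannot be used directly. On the scale $H_NN^\theta$ the mean profile converges to $\theta$ for every $c>0$, by Lemma~\ref{lem:critical-mean}, so the limit function $F$ would be constant and not strictly increasing. The constant $c_\theta$ only appears in the $1/H_N$ correction. I will therefore apply Lemma~\ref{lem:critical-mean} with $\beta=\theta$ and $c=c_\pm$. Because $\log c_\theta+(2-\theta)\gamma=0$, this gives
\[
m_N^{(1)}(t_N^\pm)=\theta\pm\frac{\varepsilon}{H_N}+o\!\left(\frac1{H_N}\right).
\]
Hence for large $N$ the mean sits at distance at least $\varepsilon/(2H_N)$ from $\theta$, above it at $t_N^+$ and below it at $t_N^-$.

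The main obstacle is that this gap is of order $1/H_N\sim1/\log N$. The crude bound of Theorem~\ref{thm:bernstein}, which uses $\Var\le w_N^\ast=1/H_N$, gives an exponent of order $(1/H_N^2)/(1/H_N)=1/H_N\to0$, which is useless. I will use the sharp variance instead. Chebyshev's inequality with Lemma~\ref{lem:critical-var} (again $\beta=\theta$, $c=c_\pm$) gives
\[
\Prob\Bigl(\bigl|M_N^{(1)}(t_N^\pm)-m_N^{(1)}(t_N^\pm)\bigr|\ge\frac{\varepsilon}{2H_N}\Bigr)
\le\frac{4H_N^2}{\varepsilon^2}\,O\!\left(\frac1{H_N^2N^\theta}\right)=O\!\left(\frac1{\varepsilon^2N^\theta}\right)\xrightarrow[N\to\infty]{}0.
\]
Alternatively, the first Bernstein bound of Theorem~\ref{thm:bernstein} gives an exponent of order $\min\{N^\theta,1\}$, which is also enough. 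Combining this with the mean gap proves both displayed limits, and the theorem follows.
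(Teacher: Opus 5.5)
This is correct and is essentially the paper's proof: Lemma~\ref{lem:critical-mean} with $\beta=\theta$ puts the means at distance of order $1/H_N$ on the correct side of $\theta$, and Chebyshev's inequality with Lemma~\ref{lem:critical-var} makes both error probabilities $O(N^{-\theta})$; the paper's choice $c_\pm=(1\pm\varepsilon)c_\theta$ instead of your $c_\theta e^{\pm\varepsilon}$ makes no difference. You should delete the ``alternatively'' remark, because it is false: in the first bound of Theorem~\ref{thm:bernstein} the term $\frac23 w_N^\ast x$, with $w_N^\ast=1/H_N$ and $x=\varepsilon/(2H_N)$, has the same order $H_N^{-2}$ as $x^2$, so the exponent tends to the constant $3\varepsilon/4$ rather than to infinity, and that bound does not go to zero.
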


\begin{proof}
Fix $\varepsilon\in(0,1)$ and set
\[
c_-:=(1-\varepsilon)c_\theta,
\qquad
c_+:=(1+\varepsilon)c_\theta,
\]
\[
t_N^-:=\frac{c_-H_NN^\theta}{\lambda},
\qquad
t_N^+:=\frac{c_+H_NN^\theta}{\lambda}.
\]
Lemma~\ref{lem:critical-mean} with $\beta=\theta$ yields
\[
m_N^{(1)}(t_N^\pm)
=
\theta+\frac{\log c_\pm+(2-\theta)\gamma}{H_N}
+o\!\left(\frac1{H_N}\right)
=
\theta+\frac{\log(c_\pm/c_\theta)}{H_N}
+o\!\left(\frac1{H_N}\right).
\]
Since $\log(c_-/c_\theta)<0<\log(c_+/c_\theta)$, there exists $\kappa>0$ such that, for all sufficiently large $N$,
\[
m_N^{(1)}(t_N^-)\le \theta-\frac{\kappa}{H_N},
\qquad
m_N^{(1)}(t_N^+)\ge \theta+\frac{\kappa}{H_N}.
\]
Therefore
\[
\Prob\!\left(\tau_{N,\theta}^{(1)}\le t_N^-\right)
\le
\Prob\!\left(\left|M_N^{(1)}(t_N^-)-m_N^{(1)}(t_N^-)\right|\ge \frac{\kappa}{H_N}\right)
\]
and similarly
\[
\Prob\!\left(\tau_{N,\theta}^{(1)} > t_N^+\right)
\le
\Prob\!\left(\left|M_N^{(1)}(t_N^+)-m_N^{(1)}(t_N^+)\right|\ge \frac{\kappa}{H_N}\right).
\]
Chebyshev's inequality and Lemma~\ref{lem:critical-var} imply that both probabilities are
\[
O(N^{-\theta})\xrightarrow[N\to \infty]{} 0.
\]
Hence
\[
\Prob\!\left(t_N^-\le\tau_{N,\theta}^{(1)}\le t_N^+\right)\longrightarrow 1.
\]
Since
\[
t_N^\pm=\frac{c_\theta(1\pm\varepsilon)H_NN^\theta}{\lambda},
\]
we obtain
\[
\Prob\!\left(
\frac{(1-\varepsilon)c_\theta}{\lambda}
\le
\frac{\tau_{N,\theta}^{(1)}}{H_NN^\theta}
\le
\frac{(1+\varepsilon)c_\theta}{\lambda}
\right)\longrightarrow 1.
\]
As $\varepsilon>0$ was arbitrary, the claim follows.
\end{proof}
In the following corollary we identify the expression for which the threshold parameter $\theta$ is reconstructed in the limit.
\begin{corollary}\label{cor:critical-log}
Fix $\theta\in(0,1)$. Then
\begin{equation}\label{eq:theta_cor_1st}
    \frac{\log\bigl(\lambda \tau_{N,\theta}^{(1)}/H_N\bigr)}{\log N}
\xrightarrow[N\to\infty]{\Prob}
\theta.    
\end{equation}
In particular,
\begin{equation}\label{eq:theta_cor_2nd}
    \frac{\log \tau_{N,\theta}^{(1)}}{\log N}
\xrightarrow[N\to\infty]{\Prob}
\theta.
\end{equation}
\end{corollary}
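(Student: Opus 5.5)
The plan is to derive both statements from Theorem~\ref{thm:critical} through a continuous mapping argument. Set
\[
R_N:=\frac{\lambda\tau_{N,\theta}^{(1)}}{H_NN^\theta}.
\]
Theorem~\ref{thm:critical} says $R_N\to c_\theta$ in probability, and $c_\theta=e^{(\theta-2)\gamma}>0$. Since $\log$ is continuous at $c_\theta>0$, we get $\log R_N\to \log c_\theta=(\theta-2)\gamma$ in probability. On the event $\{R_N>0\}$, whose probability is $1$ because $\tau_{N,\theta}^{(1)}>0$ almost surely, we have the identity
\[
\log\bigl(\lambda \tau_{N,\theta}^{(1)}/H_N\bigr)=\theta\log N+\log R_N .
\]
Dividing by $\log N$ gives
\[
\frac{\log\bigl(\lambda \tau_{N,\theta}^{(1)}/H_N\bigr)}{\log N}=\theta+\frac{\log R_N}{\log N}.
\]
The sequence $\log R_N$ is tight because it converges in probability to a finite constant, and $1/\log N\to0$. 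So the second term tends to $0$ in probability, which proves \eqref{eq:theta_cor_1st}.

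For \eqref{eq:theta_cor_2nd} I would write
\[
\log\tau_{N,\theta}^{(1)}=\log\bigl(\lambda\tau_{N,\theta}^{(1)}/H_N\bigr)+\log H_N-\log\lambda .
\]
We know $H_N=\log N+\gamma+o(1)$, so $\log H_N=\log\log N+O(1)$ and therefore $\log H_N/\log N\to0$ deterministically. Also $\log\lambda/\log N\to0$. Adding these deterministic vanishing terms to the limit in \eqref{eq:theta_cor_1st} gives \eqref{eq:theta_cor_2nd}.

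There is no real obstacle here. The only points needing care are that $c_\theta>0$, so that the logarithm is continuous at the limit, and that $\log H_N=o(\log N)$. Everything else follows directly from Theorem~\ref{thm:critical}.
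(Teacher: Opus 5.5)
Your proposal is correct and follows essentially the same route as the paper. Both arguments apply the continuous mapping theorem to $\lambda\tau_{N,\theta}^{(1)}/(H_NN^\theta)\to c_\theta>0$, divide the identity $\log(\lambda\tau_{N,\theta}^{(1)}/H_N)=\theta\log N+\log R_N$ by $\log N$, and then absorb the deterministic terms $\log H_N/\log N\to 0$ and $\log\lambda/\log N\to 0$. You are somewhat more explicit than the paper about why these error terms vanish, via tightness and $\log H_N=o(\log N)$.
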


\begin{proof}
    Set
    \begin{equation*}
        X_N := \frac{\lambda \tau_{N, \theta}^{(1)}}{H_N N^{\theta}}.
    \end{equation*}
    Theorem \ref{thm:critical} states exactly that
    \begin{equation*}
        X_N \xrightarrow[N\to \infty]{\Prob} c_{\theta},
    \end{equation*}
    where $c_{\theta} = e^{(\theta - 2)\gamma} > 0$. Since $\tau_{N, \theta}^{(1)} > 0$ almost surely, we have $X_N > 0$ almost surely, so the logarithm is well defined. Because $\log$ is continuous on $(0, \infty)$, the continuous mapping theorem gives
    \begin{equation*}
        \log X_N \xrightarrow[N \to \infty]{\Prob} \log c_{\theta} = (\theta - 2)\gamma.
    \end{equation*}
    Hence,
    \begin{equation}
        \frac{\log X_N}{\log N}\xrightarrow[N\to \infty]{\Prob} 0.
    \end{equation}
    Now we have
    \begin{equation*}
        \frac{\log \bigl(\lambda \tau_{N,\theta}^{(1)}/H_N\bigr)}{\log N} = \frac{\log(N^\theta X_N)}{\log N} = \theta + \frac{\log X_N}{\log N} \xrightarrow[N\to \infty]{\Prob} \theta,
    \end{equation*}
    which is precisely \eqref{eq:theta_cor_1st}. For \eqref{eq:theta_cor_2nd}, we first notice that
    \begin{equation*}
        \log \tau_{N, \theta}^{(1)} = \theta \log N + \log H_N - \log \lambda + \log X_N.
    \end{equation*}
    Therefore,
    \begin{equation*}
        \frac{\log \tau_{N, \theta}^{(1)}}{\log N} = \theta + \frac{\log H_N}{\log N} - \frac{\log \lambda}{\log N} + \frac{\log X_N}{\log N} \xrightarrow[N\to \infty]{\Prob} \theta.
    \end{equation*}
\end{proof}

\section{The Atomic Zipf Regime: $s>1$}\label{sec:The_Atomic_Aligned_Zipf_Regime}
Fix $s>1$ and set
\[
q_{i,N}^{(s)}=\frac{i^{-s}}{H_N^{(s)}}\1_{\{i\le N\}},
\qquad
q_i^{(s)}:=\frac{i^{-s}}{\zeta(s)},
\qquad i\ge 1,
\]
where $\zeta(s) = \sum_{n = 1}^{\infty}n^{-s}$ is the Riemann zeta function. Clearly $q_{i,N}^{(s)} \xrightarrow[N\to\infty]{} q_i^{(s)}$ for every $i \ge 1$, so the weights are atomic in the sense of Definition~\ref{def:diffuse_vs_atomic}. For simplicity, in this section, we introduce a sequence $(E_i)_{i \in \N}$ of independent and identically distributed $\mathrm{Exp}(1)$ random variables. Using this sequence we define
\[
M_\infty^{(s)}(t):=\sum_{i=1}^{\infty} q_i^{(s)} \1_{\{E_i\le \lambda q_i^{(s)} t\}}.
\]
Notice that the definition of $M_N^{(s)}(t)$ from \eqref{eq:def_of_discovered-mass_process} can be rewritten in terms of sequences $(E_i)_{i \in \N}$ and $(q_{i,N}^{(s)})_{i \in \N}$ as
\begin{equation*}
    M_N^{(s)}(t) = \sum_{i=1}^\infty q_{i,N}^{(s)} \1_{\{E_i\le \lambda q_{i,N}^{(s)} t\}},
\end{equation*}
since $E_i / (\lambda q_{i,N}^{(s)}) \sim \mathrm{Exp}(\lambda q_{i,N}^{(s)})$, and $q_{i,N}^{(s)} = 0$ for $i > N$. Both sums are almost surely absolutely convergent for every fixed $t$ because each summand is bounded by the corresponding weight and the weights are summable. 

Before proving the main result of this section (Theorem \ref{thm:atomic-hitting}), we again provide some heuristics. The regime $s>1$ is qualitatively different from the previous regimes because a few leading types now carry non-vanishing quorum weight even as $N\to\infty$. Quorum formation is therefore strongly influenced by the random arrival times of these persistent heavy types. Unlike the diffuse regime, averaging over many tiny contributions no longer occurs, so one should expect genuine randomness to survive in the limit and the quorum time to converge to a non-degenerate random object. A further subtlety appears in the atomic regime. Since the limiting process is purely atomic, its sample paths are step functions with positive jump sizes. As a result, the threshold may be reached exactly at a jump time with positive probability. The natural limiting object is therefore not the hitting time with a strict inequality
\[
\inf\{t\ge0:M_\infty^{(s)}(t)>\theta\},
\]
but rather
\[
\inf\{t\ge0:M_\infty^{(s)}(t)\ge\theta\}.
\]
This is precisely the notion that emerges from the finite-$N$ strict threshold rule in the atomic limit. We first give fixed time bounds in the atomic setting.

\begin{proposition}\label{prop:atomic-fixed-time}
Fix $s>1$ and $t > 0$. Then
\[
\E\left[\left|M_N^{(s)}(t)-M_\infty^{(s)}(t)\right|\right]
\le
(2+\lambda t)\sum_{i>N} q_i^{(s)}.
\]
\end{proposition}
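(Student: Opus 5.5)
The plan is to work with the coupling already set up in this section: $M_N^{(s)}(t)$ and $M_\infty^{(s)}(t)$ are built from the same i.i.d.\ $\mathrm{Exp}(1)$ sequence $(E_i)$. I will bound the difference term by term, take expectations, and use the triangle inequality. Write $r_N:=\sum_{i>N} q_i^{(s)}$. The identity that drives everything is
\[
H_N^{(s)}=\zeta(s)\,(1-r_N),
\qquad\text{so}\qquad
q_{i,N}^{(s)}=\frac{q_i^{(s)}}{1-r_N}\ge q_i^{(s)}\quad (i\le N).
\]
Summing the first relation over $i\le N$ gives $\sum_{i\le N}\bigl(q_{i,N}^{(s)}-q_i^{(s)}\bigr)=1-(1-r_N)=r_N$.

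\textbf{Step 1: split the difference.} I write $M_N^{(s)}(t)-M_\infty^{(s)}(t)$ as the sum of three pieces.

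The \emph{tail} piece is $-\sum_{i>N}q_i^{(s)}\1_{\{E_i\le\lambda q_i^{(s)}t\}}$. Its absolute value is at most $r_N$ deterministically.

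The \emph{weight-change} piece is $\sum_{i\le N}\bigl(q_{i,N}^{(s)}-q_i^{(s)}\bigr)\1_{\{E_i\le \lambda q_{i,N}^{(s)}t\}}$. It is nonnegative and bounded by $\sum_{i\le N}\bigl(q_{i,N}^{(s)}-q_i^{(s)}\bigr)=r_N$.

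The \emph{indicator-change} piece is $\sum_{i\le N}q_i^{(s)}\bigl(\1_{\{E_i\le\lambda q_{i,N}^{(s)}t\}}-\1_{\{E_i\le\lambda q_i^{(s)}t\}}\bigr)$. Because $q_{i,N}^{(s)}\ge q_i^{(s)}$, each bracket is a nonnegative indicator. Its expectation is $e^{-\lambda q_i^{(s)}t}-e^{-\lambda q_{i,N}^{(s)}t}\le \lambda t\bigl(q_{i,N}^{(s)}-q_i^{(s)}\bigr)$, by the mean value theorem.

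\textbf{Step 2: take expectations.} The expected indicator-change piece is at most
\[
\lambda t\sum_{i\le N}q_i^{(s)}\bigl(q_{i,N}^{(s)}-q_i^{(s)}\bigr)\le \lambda t\, r_N,
\]
using $q_i^{(s)}\le 1$ and the identity above. Adding the three bounds gives $(2+\lambda t)\,r_N$, which is the claim.

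\textbf{Main obstacle.} The only genuinely non-routine point is noticing that $q_{i,N}^{(s)}\ge q_i^{(s)}$ holds uniformly in $i\le N$. This monotonicity makes every indicator difference one-signed, so its expectation can be computed exactly rather than merely bounded. It also makes the total excess weight exactly $r_N$. Without this, one would have to control $\sum_{i\le N}\bigl|q_{i,N}^{(s)}-q_i^{(s)}\bigr|$ separately. Everything else is a direct expectation computation.
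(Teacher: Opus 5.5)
Your proof is correct and is essentially the paper's argument: the same coupling, the same monotonicity $q_{i,N}^{(s)}\ge q_i^{(s)}$, the identity $\sum_{i\le N}(q_{i,N}^{(s)}-q_i^{(s)})=\sum_{i>N}q_i^{(s)}$, and the Lipschitz bound $\lambda t\,(q_{i,N}^{(s)}-q_i^{(s)})$ on the probability of the intermediate event. The only cosmetic difference is the order of the telescoping. The paper writes the $i$-th difference as the weight excess times $\1_{\{E_i\le\lambda q_i^{(s)}t\}}$ plus $q_{i,N}^{(s)}$ times the indicator difference, and uses $q_{i,N}^{(s)}\le 1$; you attach the excess to the larger indicator and the indicator difference to $q_i^{(s)}$, and use $q_i^{(s)}\le 1$. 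Both give the same bound $(2+\lambda t)\sum_{i>N}q_i^{(s)}$.
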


\begin{proof}
For $i\le N$, write
\[
A_i(t):=q_{i,N}^{(s)}\1_{\{E_i\le \lambda q_{i,N}^{(s)} t\}},
\qquad
B_i(t):=q_i^{(s)}\1_{\{E_i\le \lambda q_i^{(s)} t\}}.
\]
Since $H_N^{(s)}<\zeta(s)$, one has $q_{i,N}^{(s)}\ge q_i^{(s)}$ for every $i\le N$. Therefore
\[
A_i(t)\ge B_i(t)
\qquad\text{almost surely.}
\]
More precisely,
\[
|A_i(t)-B_i(t)|
=(q_{i,N}^{(s)}-q_i^{(s)})\1_{\{E_i\le \lambda q_i^{(s)} t\}}
+q_{i,N}^{(s)}\1_{\{\lambda q_i^{(s)} t<E_i\le \lambda q_{i,N}^{(s)} t\}}.
\]
Taking expectations and using
\[
\Prob\!\left(\lambda q_i^{(s)} t<E_i\le \lambda q_{i,N}^{(s)} t\right)
\le
\lambda t\bigl(q_{i,N}^{(s)}-q_i^{(s)}\bigr),
\]
we obtain
\[
\E\left[|A_i(t)-B_i(t)|\right]
\le
\bigl(q_{i,N}^{(s)}-q_i^{(s)}\bigr)
+\lambda t\,q_{i,N}^{(s)}\bigl(q_{i,N}^{(s)}-q_i^{(s)}\bigr)
\le
(1+\lambda t)\bigl(q_{i,N}^{(s)}-q_i^{(s)}\bigr),
\]
because $q_{i,N}^{(s)}\le 1$. Summing over $i\le N$ and adding the tail $i>N$, we get
\begin{align*}
\E\left[\left|M_N^{(s)}(t)-M_\infty^{(s)}(t)\right|\right]
&\le
\sum_{i=1}^N (1+\lambda t)\bigl(q_{i,N}^{(s)}-q_i^{(s)}\bigr)
+\sum_{i>N} q_i^{(s)}.
\end{align*}
Moreover,
\[
\sum_{i=1}^N \bigl(q_{i,N}^{(s)}-q_i^{(s)}\bigr)
=
1-\sum_{i=1}^N q_i^{(s)}
=
\sum_{i>N} q_i^{(s)}.
\]
Substituting this identity yields the claimed bound.
\end{proof}

\begin{remark}
The same argument yields finite-dimensional convergence in the atomic regime: for any fixed times $t_1,\dots,t_m\ge 0$,
\[
\sum_{j=1}^m \E\left[\left|M_N^{(s)}(t_j)-M_\infty^{(s)}(t_j)\right|\right]\longrightarrow 0.
\]
A functional limit theorem for the full path is not used here; the hitting-time convergence below follows from a continuity-point argument.
\end{remark}

\begin{corollary}\label{cor:atomic}
Fix $s>1$ and $t>0$. Then
\[
M_N^{(s)}(t)\xrightarrow[N\to\infty]{L^1} M_\infty^{(s)}(t),
\]
and the limiting random variable is non-degenerate,
\[
\Var\bigl(M_\infty^{(s)}(t)\bigr)\ge
\bigl(q_1^{(s)}\bigr)^2 e^{-\lambda q_1^{(s)} t}\bigl(1-e^{-\lambda q_1^{(s)} t}\bigr)>0.
\]
In particular, the atomic regime does not collapse to a deterministic discovered-mass curve.
\end{corollary}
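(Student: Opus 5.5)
The corollary has two parts, and I would treat them separately. The $L^1$ convergence should follow at once from Proposition~\ref{prop:atomic-fixed-time}. For fixed $t>0$ that proposition gives
\[
\E\bigl[|M_N^{(s)}(t)-M_\infty^{(s)}(t)|\bigr]\le (2+\lambda t)\sum_{i>N}q_i^{(s)}.
\]
Since $s>1$, the series $\sum_i q_i^{(s)}=1$ converges. Its tail therefore tends to $0$ as $N\to\infty$, and $t$ is fixed, so the bound vanishes.

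For the variance lower bound I would use the independence structure of $M_\infty^{(s)}(t)$. It is a sum of the independent terms $q_i^{(s)}\1_{\{E_i\le\lambda q_i^{(s)}t\}}$, each bounded, with the weights summable. The variance of the infinite sum equals the sum of the variances, by the same reasoning as \eqref{eq:Var_MNt}. To justify this, I would note that the partial sums converge in $L^2$: the centered summands are orthogonal and their variances are bounded by $(q_i^{(s)})^2$, which is summable. This gives
\[
\Var\bigl(M_\infty^{(s)}(t)\bigr)=\sum_{i\ge1}\bigl(q_i^{(s)}\bigr)^2e^{-\lambda q_i^{(s)}t}\bigl(1-e^{-\lambda q_i^{(s)}t}\bigr).
\]
All terms are nonnegative, so dropping every term except $i=1$ yields the stated lower bound.

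The bound is strictly positive. Indeed $q_1^{(s)}=1/\zeta(s)>0$, and because $t>0$ the quantity $e^{-\lambda q_1^{(s)}t}$ lies strictly between $0$ and $1$. A positive variance means $M_\infty^{(s)}(t)$ is not almost surely constant, which is exactly the "in particular" statement.

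The only point needing any care is the interchange of variance and infinite sum. I would handle it with the $L^2$ argument above. An alternative avoids the interchange entirely: write $M_\infty^{(s)}(t)=X_1+R$, where $X_1$ is the first summand and $R$ is independent of $X_1$. Then $\Var(X_1+R)=\Var(X_1)+\Var(R)\ge\Var(X_1)$, which already gives the inequality without computing the full variance.
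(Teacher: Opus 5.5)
Your proposal is correct and follows the paper's proof essentially step for step. The $L^1$ convergence comes straight from Proposition~\ref{prop:atomic-fixed-time} because the tail $\sum_{i>N}q_i^{(s)}$ of a convergent series vanishes. The variance bound comes from writing $\Var\bigl(M_\infty^{(s)}(t)\bigr)$ as the sum of the variances of the independent summands and keeping only the $i=1$ term. Your $L^2$ justification of exchanging the variance with the infinite sum, or alternatively the decomposition $X_1+R$ with $R$ independent of $X_1$, supplies a detail that the paper leaves implicit.
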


\begin{proof}
The $L^1$ convergence follows directly from Proposition~\ref{prop:atomic-fixed-time}. For the variance bound, note that
\[
M_\infty^{(s)}(t)=\sum_{i=1}^\infty q_i^{(s)}\1_{\{E_i\le \lambda q_i^{(s)} t\}}
\]
is a sum of independent Bernoulli-weighted terms, so
\[
\Var\bigl(M_\infty^{(s)}(t)\bigr)
=
\sum_{i=1}^\infty \bigl(q_i^{(s)}\bigr)^2 e^{-\lambda q_i^{(s)} t}\bigl(1-e^{-\lambda q_i^{(s)} t}\bigr).
\]
The first summand already gives the stated strictly positive lower bound.
\end{proof}

We now find the limit of the quorum time in the atomic setting.
\begin{theorem}\label{thm:atomic-hitting}
Fix $s>1$ and $\theta\in(0,1)$. Define the limiting hitting time
\[
\sigma_{\infty,\theta}^{(s)}:=\inf\{t\ge 0:M_\infty^{(s)}(t)\ge \theta\}.
\]
Then, under the common coupling above,
\[
\tau_{N,\theta}^{(s)}\xrightarrow[N\to\infty]{a.s.}\sigma_{\infty,\theta}^{(s)}.
\]
\end{theorem}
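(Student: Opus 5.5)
The plan is a purely pathwise argument under the coupling by the exponentials $(E_i)$, establishing $\limsup_N \tau_{N,\theta}^{(s)}\le\sigma_{\infty,\theta}^{(s)}$ and $\liminf_N\tau_{N,\theta}^{(s)}\ge \sigma_{\infty,\theta}^{(s)}$ on an almost sure event. Two structural facts about the coupling will be used repeatedly.

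\emph{First fact.} Since $H_N^{(s)}$ is strictly increasing in $N$ with limit $\zeta(s)$, for each fixed $i$ the weights $q_{i,N}^{(s)}$ are non-increasing in $N$ for $N\ge i$ and converge to $q_i^{(s)}$. Hence the events $\{E_i\le \lambda q_{i,N}^{(s)}t\}$ decrease in $N$, and their intersection is exactly $\{E_i\le \lambda q_i^{(s)}t\}$ (closed inequality). So each summand converges for \emph{every} $\omega$ and every $t$. Moreover $q_{i,N}^{(s)}\le i^{-s}$, because $H_N^{(s)}\ge1$, and this bound is summable. Dominated convergence for series then gives $M_N^{(s)}(t)\to M_\infty^{(s)}(t)$ for all $t$ simultaneously, surely rather than merely almost surely.

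\emph{Second fact.} As in the proof of Proposition~\ref{prop:atomic-fixed-time}, termwise $A_i(t)\ge B_i(t)$, so $M_N^{(s)}(t)\ge M_\infty^{(s)}(t)$. In fact the inequality is strict as soon as some type $i\le N$ has been discovered in the limit process, because then $q_{i,N}^{(s)}>q_i^{(s)}$.

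Next I would record the sample-path structure of $M_\infty^{(s)}$. Write $T_i^\infty=E_i/(\lambda q_i^{(s)})$. For every $b$ we have $\sum_i\Prob(T_i^\infty\le b)\le \lambda b\sum_i q_i^{(s)}<\infty$, so Borel--Cantelli (applied along $b\in\N$) shows that almost surely only finitely many types are discovered by any finite time. Thus $M_\infty^{(s)}$ is a right-continuous step function with finitely many jumps on compacts. Also $M_\infty^{(s)}(t)\to1$ as $t\to\infty$ by monotone convergence, since every $T_i^\infty<\infty$. Hence $\sigma:=\sigma_{\infty,\theta}^{(s)}<\infty$ almost surely, $M_\infty^{(s)}(\sigma)\ge\theta$ by right continuity, and $\sigma$ is a jump time. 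Since $\theta>0$, $\sigma>0$.

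\emph{Upper bound.} At time $\sigma$ at least one type has been discovered in the limit process, and it has index $i\le N$ for all large $N$. By the strict form of the second fact, $M_N^{(s)}(\sigma)>M_\infty^{(s)}(\sigma)\ge\theta$ for all large $N$. Therefore $\tau_{N,\theta}^{(s)}\le\sigma$ eventually, and this handles the subtle case $M_\infty^{(s)}(\sigma)=\theta$ without any continuity-point argument.

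\emph{Lower bound.} Fix $\varepsilon\in(0,\sigma)$. Then $M_\infty^{(s)}(\sigma-\varepsilon)<\theta$ by definition of $\sigma$. By the first fact, $M_N^{(s)}(\sigma-\varepsilon)<\theta$ for all large $N$. Monotonicity of $M_N^{(s)}$ then gives $M_N^{(s)}(t)\le M_N^{(s)}(\sigma-\varepsilon)<\theta$ for all $t\le\sigma-\varepsilon$, so $\tau_{N,\theta}^{(s)}>\sigma-\varepsilon$. Letting $\varepsilon\downarrow0$ along a countable sequence completes the proof.

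The step I expect to need the most care is the upper bound at a jump where the limit lands exactly on $\theta$. Fortunately the strict domination $M_N^{(s)}>M_\infty^{(s)}$ coming from $H_N^{(s)}<\zeta(s)$ resolves it. The remaining checks are routine: that the discovered type at time $\sigma$ has finite index, and that $\sigma$ is almost surely finite and positive.
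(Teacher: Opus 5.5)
Your second fact is false as stated, and the upper bound depends on it. The termwise inequality $A_i(t)\ge B_i(t)$ only covers $i\le N$. But $M_\infty^{(s)}(t)$ also contains the terms $q_i^{(s)}\1_{\{E_i\le\lambda q_i^{(s)}t\}}$ with $i>N$, and these have no counterpart in $M_N^{(s)}(t)$. For example, with $N=1$ one has $M_1^{(s)}(t)=0<M_\infty^{(s)}(t)$ on the event $\{E_1>\lambda t,\ E_2\le\lambda q_2^{(s)}t\}$, which has positive probability for $t>0$.

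So in the upper bound it is not enough that \emph{one} type discovered by time $\sigma$ has index at most $N$; you need \emph{all} of them to. The repair uses a fact you already proved. By Borel--Cantelli the set $A_\sigma:=\{i:E_i\le\lambda q_i^{(s)}\sigma\}$ is almost surely finite, and it is nonempty because $M_\infty^{(s)}(\sigma)\ge\theta>0$. For $N\ge\max A_\sigma$, every $i\in A_\sigma$ satisfies $E_i\le\lambda q_{i,N}^{(s)}\sigma$, hence
\[
M_N^{(s)}(\sigma)\ \ge\ \sum_{i\in A_\sigma}q_{i,N}^{(s)}\ =\ \frac{\zeta(s)}{H_N^{(s)}}\sum_{i\in A_\sigma}q_i^{(s)}\ =\ \frac{\zeta(s)}{H_N^{(s)}}\,M_\infty^{(s)}(\sigma)\ >\ \theta .
\]
This is exactly the estimate the paper uses in its case $f(b)=\theta$. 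With this correction your argument is complete.

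Otherwise your route differs from the paper's in a worthwhile way.

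\textbf{The paper's route.} It proves $f_N(t)\to f(t)$ only at continuity points of $f$, by truncating at $K$ and avoiding the jump times $E_i/(\lambda q_i^{(s)})$. It then sandwiches $\tau_{N,\theta}^{(s)}$ between continuity points $a<\sigma<b$. This forces a case split at $b$, according to whether $f(b)>\theta$ or $f(b)=\theta$.

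\textbf{Your route.} Your first fact, which is correct, uses the monotonicity $q_{i,N}^{(s)}\downarrow q_i^{(s)}$ in $N$. The events $\{E_i\le\lambda q_{i,N}^{(s)}t\}$ then decrease to the closed event $\{E_i\le\lambda q_i^{(s)}t\}$, so convergence holds at every $t$, including jump times. This lets you test directly at $\sigma-\varepsilon$ and at $\sigma$ itself. It removes the case split and gives the sharper one-sided statement $\tau_{N,\theta}^{(s)}\le\sigma$ for all large $N$.

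\textbf{What each needs.} The paper's continuity-point convergence uses only $q_{i,N}^{(s)}\to q_i^{(s)}$, not the direction of approach. Both proofs still need $q_{i,N}^{(s)}>q_i^{(s)}$ to handle the boundary case where the limit lands exactly on $\theta$.
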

\begin{proof}[Proof of Theorem \ref{thm:atomic-hitting}]
We first record two almost-sure properties of the limiting process. For every $T>0$,
\[
\sum_{i=1}^\infty \Prob\!\left(E_i\le \lambda q_i^{(s)} T\right)
=
\sum_{i=1}^\infty \bigl(1-e^{-\lambda q_i^{(s)} T}\bigr)
\le
\lambda T\sum_{i=1}^\infty q_i^{(s)}
=
\lambda T<\infty.
\]
Hence Borel--Cantelli implies that, for each fixed $T$, almost surely only finitely many indices $i$ satisfy $E_i\le \lambda q_i^{(s)}T$. To make this simultaneous over compact intervals, set
\[
\Omega_0:=
\bigcap_{m=1}^\infty
\left\{
\#\{i:E_i\le \lambda q_i^{(s)}m\}<\infty
\right\}.
\]
Then $\Prob(\Omega_0)=1$, and on $\Omega_0$ the process $M_\infty^{(s)}$ has only finitely many jumps on every compact time interval. Since each indicator in the definition of $M_\infty^{(s)}(t)$ converges monotonically to $1$ as $t\to\infty$, one also has
\[
M_\infty^{(s)}(t)\uparrow \sum_{i=1}^\infty q_i^{(s)}=1
\qquad\text{almost surely,}
\]
and therefore $\sigma_{\infty,\theta}^{(s)}<\infty$ almost surely. Fix a sample point
$\omega\in\Omega_0$ for which this convergence to $1$ holds and for which $E_i(\omega)>0$
for all $i$, and write
\[
f_N(t):=M_N^{(s)}(t,\omega),
\qquad
f(t):=M_\infty^{(s)}(t,\omega).
\]
Let $t\ge 0$ be a continuity point of $f$. We claim that
\begin{equation}\label{eq:f_N_to_f}
    f_N(t)\xrightarrow[N\to \infty]{} f(t).
\end{equation}
Indeed, fix $\varepsilon>0$ and choose $K$ so large that
\[
2\sum_{i>K} i^{-s}<\varepsilon.
\]
Since $q_{i,N}^{(s)}\le i^{-s}$ and $q_i^{(s)}\le i^{-s}$, this implies
\begin{equation}\label{eq:tail_bound_q}
    \sum_{i>K} q_{i,N}^{(s)}+\sum_{i>K} q_i^{(s)}<\varepsilon.
\end{equation}
This holds for all $N \in \N$. Notice that the first sum is zero if $N \le K$. For the first $K$ terms (i.e.\ for $i \le K$), continuity of $f$ at $t$ means that
\[
t\neq \frac{E_i(\omega)}{\lambda q_i^{(s)}},
\qquad\text{for } i=1,\dots,K,
\]
so for each such $i$ one has
\begin{equation}\label{eq:q_iN_cvg_to_q_i}
q_{i,N}^{(s)}\1_{\{E_i(\omega)\le \lambda q_{i,N}^{(s)} t\}}
\xrightarrow[N\to \infty]{}
q_i^{(s)}\1_{\{E_i(\omega)\le \lambda q_i^{(s)} t\}}.
\end{equation}
We now fix $K$ to be large enough so that \eqref{eq:tail_bound_q} holds. For such $K$ we have
\[
\left|
f_N(t)-f(t)
\right|
\le
\sum_{i=1}^K
\left|
q_{i,N}^{(s)}\1_{\{E_i(\omega)\le \lambda q_{i,N}^{(s)} t\}}
-
q_i^{(s)}\1_{\{E_i(\omega)\le \lambda q_i^{(s)} t\}}
\right|
\,+\,\varepsilon.
\]
Using \eqref{eq:q_iN_cvg_to_q_i}, we get that the finite sum tends to $0$. Since $\varepsilon > 0$ was arbitrary, this proves the claim \eqref{eq:f_N_to_f}.

Now, for the simplicity of notation, let
\[
\sigma=\sigma_{\infty,\theta}^{(s)}(\omega).
\]
Since $f(0)=0<\theta$ and $f$ has only finitely many jumps on compact intervals, one has $\sigma>0$. Fix $\varepsilon>0$. Since $f$ is nondecreasing and has at most finitely many jumps on $[\max\{\sigma-\varepsilon,0\},\sigma+\varepsilon]$, one can choose continuity points
\[
a\in(\max\{\sigma-\varepsilon,0\},\sigma),
\qquad
b\in(\sigma,\sigma+\varepsilon)
\]
such that
\[
f(a)<\theta.
\]
By the continuity-point convergence proved above (see \eqref{eq:f_N_to_f}),
\[
f_N(a)\xrightarrow[N \to \infty]{} f(a).
\]
Hence, for all sufficiently large $N$,
\[
f_N(a)<\theta.
\]
At the upper time $b$, there are two cases.

If $f(b)>\theta$, then continuity-point convergence gives $f_N(b)\to f(b)$, hence $f_N(b)>\theta$ for all sufficiently large $N$.

If $f(b)=\theta$, let
\[
A_b(\omega):=\{i\in\N:E_i(\omega)\le \lambda q_i^{(s)}b\}.
\]
Since $b<\infty$ and $f$ has only finitely many jumps on compact intervals, the set $A_b(\omega)$ is finite. For every $N\ge \max A_b(\omega)$, the identity
\[
q_{i,N}^{(s)}=\frac{\zeta(s)}{H_N^{(s)}}\,q_i^{(s)}=:c_Nq_i^{(s)},
\qquad c_N>1,
\]
implies
\[
f_N(b)
\ge
\sum_{i\in A_b(\omega)} q_{i,N}^{(s)}
=
c_N\sum_{i\in A_b(\omega)} q_i^{(s)}
=
c_N f(b)
=
c_N\theta
>
\theta.
\]
Hence, in either case $f_N(b)>\theta$ for all sufficiently large $N$. By monotonicity of $f_N$, this implies
\[
a<\tau_{N,\theta}^{(s)}(\omega)\le b.
\]
Therefore
\[
|\tau_{N,\theta}^{(s)}(\omega)-\sigma|<\varepsilon
\]
for all sufficiently large $N$. Since $\varepsilon>0$ was arbitrary,
\begin{equation*}
    \tau_{N,\theta}^{(s)}(\omega)\xrightarrow[N\to\infty]{} \sigma.    
\end{equation*}
The argument holds on an event of probability one, so the convergence is almost sure.
\end{proof}

\begin{corollary}\label{cor:atomic-expectation}
Fix $s>1$ and $\theta\in(0,1)$. Then
\[
\tau_{N,\theta}^{(s)}\xrightarrow[N\to \infty]{L^1} \sigma_{\infty,\theta}^{(s)}.
\]
\end{corollary}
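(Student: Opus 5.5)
Since Theorem~\ref{thm:atomic-hitting} already gives $\tau_{N,\theta}^{(s)}\to\sigma_{\infty,\theta}^{(s)}$ almost surely under the common coupling, $L^1$ convergence will follow from Vitali's theorem once the family $\{\tau_{N,\theta}^{(s)}\}_{N}$ is shown to be uniformly integrable. We also need $\E[\sigma_{\infty,\theta}^{(s)}]<\infty$, but Fatou's lemma gives this from a uniform moment bound on the $\tau$'s. The whole task is therefore to find an integrable dominating variable, or a uniform second-moment or exponential tail bound, for $\tau_{N,\theta}^{(s)}$.

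The natural domination uses only finitely many heavy types. Since $\theta<1$ and $\sum_i q_i^{(s)}=1$, I would fix $K$ with $\sum_{i\le K} q_i^{(s)}>\theta$. For $N\ge K$ we have $q_{i,N}^{(s)}\ge q_i^{(s)}$ for $i\le K$. Once all of types $1,\dots,K$ are discovered, the discovered mass is at least $\sum_{i\le K}q_{i,N}^{(s)}\ge\sum_{i\le K}q_i^{(s)}>\theta$. Under the coupling, type $i$ is discovered at time $E_i/(\lambda q_{i,N}^{(s)})\le E_i/(\lambda q_i^{(s)})$. Hence, for all $N\ge K$,
\[
\tau_{N,\theta}^{(s)}\le Y:=\max_{1\le i\le K}\frac{E_i}{\lambda q_i^{(s)}}\le \sum_{i=1}^K\frac{E_i}{\lambda q_i^{(s)}}.
\]
The right-hand side has finite mean $\sum_{i\le K}(\lambda q_i^{(s)})^{-1}<\infty$, since $K$ is fixed and independent of $N$. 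The same bound holds for $\sigma_{\infty,\theta}^{(s)}$, because $M_\infty^{(s)}(Y)\ge\sum_{i\le K}q_i^{(s)}>\theta$. The finitely many indices $N<K$ do not matter for a limit statement.

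Dominated convergence then gives $\E|\tau_{N,\theta}^{(s)}-\sigma_{\infty,\theta}^{(s)}|\to0$, since this difference is bounded by $2Y$ for $N\ge K$ and tends to $0$ almost surely. The only delicate point is the monotone comparison $q_{i,N}^{(s)}\ge q_i^{(s)}$, which follows from $H_N^{(s)}<\zeta(s)$, as already used in Proposition~\ref{prop:atomic-fixed-time}. Because of it, a single $K$ works uniformly in $N$, and I do not expect any real obstacle beyond that observation.
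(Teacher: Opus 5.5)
Your proof is correct and is essentially the paper's argument. The paper also dominates both $\tau_{N,\theta}^{(s)}$ (for $N\ge K_\theta$) and $\sigma_{\infty,\theta}^{(s)}$ by $Y_\theta=\max_{i\le K_\theta}E_i/(\lambda q_i^{(s)})$ using $q_{i,N}^{(s)}\ge q_i^{(s)}$, and then concludes by dominated convergence from Theorem~\ref{thm:atomic-hitting}. The only cosmetic difference is that the paper chooses $K_\theta$ with $\sum_{i\le K_\theta}q_i^{(s)}\ge\theta$, which implicitly needs the strict inequality $q_{i,N}^{(s)}>q_i^{(s)}$ to get $M_N^{(s)}>\theta$; your strict choice of $K$ makes that step immediate.
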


\begin{proof}
Let
\[
K_\theta:=\min\left\{k\in\N:\sum_{i=1}^k q_i^{(s)}\ge \theta\right\},
\qquad
Y_\theta:=\max_{1\le i\le K_\theta}\frac{E_i}{\lambda q_i^{(s)}}.
\]
Since $q_{i,N}^{(s)}\ge q_i^{(s)}$ for $i\le N$, one has for every $N\ge K_\theta$,
\[
\tau_{N,\theta}^{(s)}
\le
\max_{1\le i\le K_\theta}\frac{E_i}{\lambda q_{i,N}^{(s)}}
\le
Y_\theta.
\]
The same bound implies
\[
\sigma_{\infty,\theta}^{(s)}\le Y_\theta.
\]
Because $Y_\theta$ is integrable and Theorem~\ref{thm:atomic-hitting} gives almost-sure convergence, dominated convergence yields the claim.
\end{proof}

\section{Exact Non-Monotonicity in the Smallest Nontrivial System}\label{sec:Exact_Non-Monotonicity}
The non-monotonicity phenomenon already appears in the aligned Zipf model with $N=2$.

\begin{theorem}\label{thm:n2}
Consider the Zipf family with $N=2$:
\[
w_1(s)=\frac{1}{1+2^{-s}},
\qquad
w_2(s)=\frac{2^{-s}}{1+2^{-s}},
\qquad s\ge 0.
\]
Fix $\theta\in(1/2,1)$ and set
\[
s_\theta:=\log_2\!\left(\frac{\theta}{1-\theta}\right).
\]
Then the expected quorum time has the explicit form
\[
\E[\tau_{2,\theta}^{(s)}]
=
\begin{cases}
\dfrac{1}{\lambda}\!\left(\dfrac{1}{w_1(s)}+\dfrac{1}{w_2(s)}-1\right), & 0\le s\le s_\theta,\\[1.0em]
\dfrac{1}{\lambda\, w_1(s)}, & s>s_\theta.
\end{cases}
\]
Moreover:
\begin{itemize}
\item the map $s\mapsto \E[\tau_{2,\theta}^{(s)}]$ is strictly increasing on $[0,s_\theta]$;
\item the map $s\mapsto \E[\tau_{2,\theta}^{(s)}]$ is strictly decreasing on $(s_\theta,\infty)$.
\end{itemize}
Hence global monotonicity in the aligned heterogeneity parameter is false.
\end{theorem}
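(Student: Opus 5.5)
With $N=2$ and $p=w$, the times $T_1\sim\mathrm{Exp}(\lambda w_1)$ and $T_2\sim\mathrm{Exp}(\lambda w_2)$ are independent, and $w_1\ge w_2$ with $w_1+w_2=1$. Since $\theta>1/2$, neither $\emptyset$ nor $\{2\}$ has weight exceeding $\theta$ (as $w_2\le 1/2<\theta$), while $\{1,2\}$ always does. The only case split is therefore whether the single heavy type already suffices, i.e.\ whether $w_1(s)>\theta$. I would first note the equivalence
\[
w_1(s)>\theta\iff 1+2^{-s}<\tfrac1\theta\iff 2^{-s}<\tfrac{1-\theta}{\theta}\iff s>s_\theta .
\]
Since the quorum condition $M_N(t)>\theta$ is strict, at $s=s_\theta$ we have $w_1=\theta$, and type $1$ alone does \emph{not} suffice. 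This places the endpoint $s_\theta$ in the first case, consistent with the statement.

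\textbf{Case $s\le s_\theta$.} Quorum requires both types, so $\tau_{2,\theta}^{(s)}=\max(T_1,T_2)$. Using $\E\max=\E T_1+\E T_2-\E\min$ and $\min(T_1,T_2)\sim\mathrm{Exp}(\lambda(w_1+w_2))=\mathrm{Exp}(\lambda)$, I get
\[
\E[\tau]=\frac1\lambda\Bigl(\frac1{w_1}+\frac1{w_2}-1\Bigr).
\]

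\textbf{Case $s>s_\theta$.} Type $1$ alone crosses the threshold and type $2$ alone does not, so $\tau_{2,\theta}^{(s)}=T_1$ and $\E[\tau]=1/(\lambda w_1)$. One could alternatively verify both cases via \eqref{eq:E_tau_Tonelli}, but the direct identification of $\tau$ is cleaner.

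\textbf{Monotonicity.} Write $x=2^{-s}\in(0,1]$, which is strictly decreasing in $s$. Then $1/w_1=1+x$ and $1/w_2=1+1/x$.
\begin{itemize}
\item On $[0,s_\theta]$: $\lambda\E[\tau]=1+x+1/x$. The map $x\mapsto x+1/x$ is strictly decreasing on $(0,1]$, and $x$ decreases as $s$ increases, so the expectation is strictly increasing in $s$. Note that $s_\theta>0$ because $\theta>1/2$, so this interval is nondegenerate.
\item On $(s_\theta,\infty)$: $\lambda\E[\tau]=1+x$, which is strictly decreasing in $s$.
\end{itemize}
Non-monotonicity follows. There is also a downward jump at $s_\theta$, of size $1/(\lambda x)>0$, which further exhibits failure of monotonicity.

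The only delicate point is the boundary case $w_1=\theta$ under the strict inequality; there is no real obstacle.
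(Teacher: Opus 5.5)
Your proposal is correct and follows the paper's proof essentially step for step: the same case split on whether $w_1(s)>\theta$ (with the endpoint $s=s_\theta$ correctly placed in the two-type case because the threshold inequality is strict), the same identification $\tau=\max\{T_1,T_2\}$ or $\tau=T_1$, and the same max/min identity for the expectation. Your monotonicity argument via $x=2^{-s}$ and the monotonicity of $x+1/x$ on $(0,1]$ is just a reparametrization of the paper's argument via $f(w)=\frac1w+\frac1{1-w}-1$ and the monotonicity of $w_1(s)$. The downward jump at $s_\theta$ that you point out is a small extra observation not made in the paper.
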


\begin{proof}
Since $\theta>1/2$ and $w_1(s)\ge w_2(s)$, the quorum rule is as follows.

If $w_1(s)\le \theta$, then one type is not enough and both types must be observed. Since
\[
w_1(s)=\frac{1}{1+2^{-s}},
\]
the inequality $w_1(s)\le \theta$ is equivalent to $s\le s_\theta$. In that regime,
\[
\tau_{2,\theta}^{(s)}=\max\{T_1,T_2\},
\]
where $T_i\sim \mathrm{Exp}(\lambda w_i(s))$ independently. The expectation of the maximum of two independent exponentials is
\[
\E[\max\{T_1,T_2\}]
=
\E[T_1]+\E[T_2]-\E[\min\{T_1,T_2\}]
=
\frac{1}{\lambda w_1(s)}+\frac{1}{\lambda w_2(s)}-\frac{1}{\lambda},
\]
which gives the first formula.

If $w_1(s)>\theta$, equivalently $s>s_\theta$, then the first type alone already exceeds the threshold, so
\[
\tau_{2,\theta}^{(s)}=T_1,
\]
and
\[
\E[\tau_{2,\theta}^{(s)}]=\frac{1}{\lambda w_1(s)}.
\]

It remains to prove the monotonicity claims. The function $w_1(s)$ is clearly strictly increasing and $w_2(s)=1-w_1(s)$. The function
\[
f(w):=\frac{1}{w}+\frac{1}{1-w}-1
\]
satisfies
\[
f'(w)=-\frac{1}{w^2}+\frac{1}{(1-w)^2}>0
\qquad \text{for } w\in(1/2,1).
\]
Thus $f(w_1(s))/\lambda$ is strictly increasing in $s$ on $[0,s_\theta]$. On the second branch,
\[
s \mapsto \frac{1}{\lambda w_1(s)}
\]
is strictly decreasing because $w_1(s)$ is strictly increasing. Therefore the expectation increases and then decreases, so it cannot be globally monotone.
\end{proof}

\section{Discussion}\label{sec:Discussion}
Weighted-threshold collection depends on two separate objects: the activity
profile \(p\), which determines how quickly types are discovered, and the weight
profile \(w\), which determines how much each discovered type contributes to the
threshold.

When \(p_i=1/N\) and \(\max_i w_i\to0\), Theorem~\ref{thm:homogeneous} gives the universal limit. Within this class, the first-order threshold time is independent of the shape of \(w\). In the aligned Zipf family $p_i=w_i\propto i^{-s}$, the behavior depends on the exponent. In the diffuse regime $0\le s<1$, the discovered-mass process concentrates around its mean profile and the quorum time becomes asymptotically deterministic on a linear time scale. The local expansion of Theorem~\ref{thm:local-monotonicity} shows that the effect of a small Zipf perturbation of the uniform case changes sign at \(\theta_c=1-e^{-2}\). 
At the critical point $s=1$, the sharp scale is $H_NN^\theta$, with the explicit constant from Theorem~\ref{thm:critical}. In the atomic regime $s>1$, macroscopic atoms survive, the discovered mass does not self-average, and the quorum time converges for every threshold to the limiting hitting time \(\sigma_{\infty,\theta}^{(s)}\) almost surely (see Theorem \ref{thm:atomic-hitting}) and in $L^1$ (see Corollary~\ref{cor:atomic-expectation}).

Several mathematical directions remain open:
\begin{itemize}
\item extend the aligned Zipf analysis to broader profile pairs $(p^{(N)},w^{(N)})$, especially regularly varying families and perturbations of the homogeneous-clock benchmark;
\item determine the next-order behavior and possible fluctuation theory at the critical scale $H_NN^\theta$;
\item develop limiting fluctuation or distributional results for the closed atomic hitting time $\sigma_{\infty,\theta}^{(s)}$ and its dependence on $s$ and $\theta$;
\item isolate structural conditions on $(p,w)$ or on the threshold level under which monotonicity may still hold, despite the explicit counterexample of Theorem~\ref{thm:n2}.
\end{itemize}

From an applied point of view, the results show that weighted quorum formation
depends not only on the distribution of quorum weights, but also on how these
weights are correlated with activity. If activity is homogeneous and no
individual weight is asymptotically visible, weight heterogeneity has no
first-order effect. In contrast, when activity and weight are aligned, as in the
Zipf model, heterogeneity changes both the scale and the nature of the quorum
time. In particular, sufficiently concentrated weights lead to a regime in
which the discovery times of a few large-weight types remain visible in the
limit. Thus, in weighted quorum systems, concentration of weight can create an
intrinsic randomness in confirmation times that is not removed by increasing the
number of participants.

\section*{Declaration of generative AI and AI-assisted technologies}

\noindent
During the preparation of this work, the authors used generative AI systems as interactive tools for discussing ideas and improving exposition. All mathematical derivations, proofs, and final editorial decisions were carried out and verified by the authors. The authors take full responsibility for the correctness and content of the manuscript.

\section*{Acknowledgments} 

\noindent
Financial support through the \emph{Croatian Science Foundation} under project IP-2022-10-2277 (for S.\ \v{S}ebek) is gratefully acknowledged. This research was also funded by the European union--NextGenerationEU through the National Recovery and Resilience Plan 2021-2026 Institutional grant of University of Zagreb Faculty of Electrical Engineering and Computing (VALOR). This work was carried out within a project DIGIT.2.1.02.016 funded by the Digital, Innovation, and Green Technology Project – DIGIT Project (IBRD Loan No. 9558‑HR).

\bibliographystyle{plain}
\bibliography{literature.bib}

\end{document}